\numberwithin{equation}{section} \theoremstyle{plain}
\newtheorem{theorem}{Theorem}[section]
\newtheorem{lemma}[theorem]{Lemma}
\newtheorem{proposition}[theorem]{Proposition}
\theoremstyle{definition}
\theoremstyle{remark}
\numberwithin{equation}{section}
\newcommand{\grad}{\operatorname{grad}}
\newcommand{\img} {\operatorname{img}}
\newcommand{\ind}{\operatorname{ind}}
\begin{document}
\title {Witten deformation and the spectral package 
of a Riemannian manifold}

\author{Dan Burghelea}

\keywords{Witten deformation, Spectral package}

\subjclass{35P20}

\date{\today}



\begin{abstract}
The Witten deformation associated to a Morse function $f$ on a closed Riemannian manifold $(M,g)$ via Rellich-Kato theorem (see section 3) relates analytically the spectral package of $(M,g)$ (eigenvalues and eigenforms) to the Morse complex defined by $(g,f)$ coupled with the "multivariable harmonic oscillators" associated to the critical points of $f.$ We survey this relation and discuss some implications, including the  finite subset of the spectral package referred to as the "virtually small spectral package" .
\end{abstract}

\dedicatory{ Dedicated to the memory of Cabiria Andreian Cazacu  
a mathematician I much admired}
\maketitle

\setcounter{tocdepth}{1}
\tableofcontents

\section {Introduction}

A smooth map $f: M\to \mathbb R$ on a  closed Riemannian manifold $(M,g),$  via Witten deformation, provides an analytic family of  self adjoint operators 
$\Delta^q(t)$ with  $\Delta^q(0)=\Delta^q$  the Laplace - Beltrami operator on  $q-$ differential forms on $M$,  see section 3 below, for definition. This family is  "holomorphic  of type A"  in the sense of Kato, cf \cite{TK}.

Rellich-Kato theorem , cf \cite {TK} Theorem 3.9,  organizes the eigenvalues and the eigenforms of $\Delta^q(t),$ for all $t\in \mathbb R,$ as  a  {\it countable collection}   $\mathcal A^q$ of analytic 
real-valued functions $\lambda^q_\alpha (t) $ and analytic $q-$differential form-valued maps $\omega^q_\alpha(t)$ s.t. for any $t$ \ $\Delta^q(t)( \omega^q_\alpha(t))= \lambda^q_\alpha(t)\omega^q_\alpha(t)$ and the collection $\{\lambda^q_\alpha(t), \omega^q_\alpha (t), \alpha\in \mathcal A^q\}$ represent the entire spectral package of $\Delta^q(t) $  (i.e. the collection of eigenvalues and eigenvectors of an operator, in this case $\Delta^q(t)$).  We call $\lambda^q_\alpha(t)$s and $\omega ^q_\alpha (t)$s branches  of eigenvalues  and eigenforms. 

If $f$ is a Morse function then the infinite collection $\mathcal A^q$ clusters as a disjoint union of finite collections $\mathcal A^q(k),$ $k\in \mathbb Z_{\geq 0},$ 
and the  set $\mathcal A^q(\infty)$ conjecturally empty, characterized by the 
property $\lim_{t\to \infty} \lambda ^q_\alpha(t) /t = 2 c_k$, $0= c_0 <c_1 <c_2<\cdots ,$ $c_k\to \infty$,  whose cardinalities for each $k$ is finite and depends only on the set of critical points $x$ 
and the scalar product $g(x),$ the metric $g$ provides on $T_x(M)$ 
\footnote {One 
can ask what happens when $t\to -\infty.$ The answer is similar; if one writes $\Delta^{q,f}(t)$ instead $\Delta^q(t)$ in order to indicate what smooth function is considered, then the Hodge operator $\star$ intertwines $\Delta^{q,f}(t)$ with $\Delta^{n-q, -f}(t)= \Delta^{n-q,f} (-t).$  This makes $\lambda^{q,f}_{\cdots}(-t)= \lambda^{n-q, f}_{\cdots}(t).$
}. 
In particular if in the small neighborhood of each critical point the metric is flat then the cardinality depends only on the number of critical points of each index, as specified by Theorem \ref{T3.3}. 

The  cluster corresponding to $k=0$  is in bijective correspondence to the set of critical points of index $q$  
and determines  an analytic family in $t\in \mathbb R$ of cochain complexes with scalar product,  all with the components and the cohomology of the same dimension. 
The cochain complex corresponding to $t=0$  
is isometrically embedded in $(\Omega^\ast (M), d^\ast) $ equipped with  the scalar product defined by the  metric $g,$ and is left invariant by $\Delta^q. $   The cochain complex corresponding to $t,$ when $t\to \infty,$ is $O(1/t)-$isometric to the appropriately scaled 
 geometric complex  (Morse complex) defined  by the vector field $(-\grad_g f)$ and equipped  with the scalar product  making the  characteristic functions of  critical points orthonormal, cf. Theorem \ref{T6.1}.

The restriction to $t=0$ selects from the infinite spectral package of $(M,g)$  a finite collection $\{\lambda^q_\alpha(0), \omega^q_\alpha(0),$ $\alpha \in \mathcal A^q(0)\subset \mathcal A^q\}$
referred below as the {\it $q-$virtually small spectral package} of $(M,g)$ associated with $f$ which has remarkable topological and geometrical properties, cf.Theorem \ref {T6.2}.  

It is important to note that the collection of virtually small eigenvalues which is of cardinality $c_q=\sharp  Cr_q(f)$  does not necessary consists of the smallest $c_q$ eigenvalues of $\Delta^q,$  however they   are the restriction at $t=0$ of the  eigenvalue branches $\lambda_\alpha^q(t)$ whose restriction to $t$ large enough exhaust the first smallest $c_q$  
eigenvalues of $\Delta^q(t).$ In different words, they become, by analytic continuation in $t,$ the smallest $c_q$ eigenvalues.

Note also that, in view of  analyticity,  the restriction of $\omega_\alpha^q(t)$  to an arbitrary small neighborhood of $t_0$ 
determines completely $\omega_\alpha^q(t)$ for any $t$ and then  $\omega^q_\alpha(0).$  

 {\bf Observation:} In view of  Propositions \ref{P5.2} and \ref{P5.1}, the restrictions of $\omega^q_\alpha (t_0)$ when  $\alpha\in \mathcal A^q(0)$ and  $\alpha\in \mathcal A^q\setminus \mathcal A^q(0)$  for $t_0$  very large are very different, as one can read off from Proposition (\ref {P5.2}) and Proposition (\ref{P5.1}) respectively;  one expects this be manifest in some way for the restriction at $t=0$ and provide a way to recognize the collection $\mathcal A^q(0).$ 

\vskip .1in
The purpose of this largely expository paper is to present these observations as a starting point  in exploring the geometric and topological applications/implications  of the virtually small spectral package.

Theorem \ref {T6.2} item (2)  suggests that at least one topological invariant  involving the  entire spectral package, as stated in Theorem \ref{T2.1} item (2), can be recovered from the virtually small spectral package. One expects that this is the case with many more geometric and topological invariants,     

Theorems \ref {T3.3} and \ref{T6.2} can be regarded as new statements,  while the others, Theorems \ref{T3.2}, \ref {T4.1}, \ref{T6.1} only as  some improvements of results already published.

The paper contains in addition to introduction  six more sections. 
 
-   Section 2  reviews  the {\it spectral package} of a closed Riemannian manifold.

-   Section 3 reviews the Witten deformation and  Witten-Laplace operators $\Delta^q(t)$, the Rellich - Kato  result on analytic branches of eigenvalues and 
eigenforms for $\Delta^q(t)$ and their clustering.

-   Section 4 reviews the CW structure defined by a pair $(g,f)$  
and the associated geometric complex known as the Morse complex. 

- Section 5 sketches the proof of Theorems \ref{T3.2} and \ref{T3.3}.

- Section 6 discusses the cluster $\mathcal A^q(0)$ and sketches the proof of Theorems \ref{T6.1} and \ref{T6.2}.

- Section 7  formulates a few conjectures  and  problems. 

\section {Spectral package of a Riemannian manifold $(M^n,g)$} \label {S:specpac}

Let $M^n$ be a closed differential manifold equipped with a Riemannian metric  $g$.
The differential structure on $M$ provides

\begin{itemize}
\item  $\Omega^r(M^n)$ - the vector space of differential forms of degree $r,$
\item $\mathcal X(M)$ - the vector space  of smooth vector fields,  

both modules over $\Omega^0(M)= \mathcal D(M),$
the commutative algebra of smooth functions, 
\item $\wedge : \Omega^r(M) \times \Omega^{r'}(M) \to \Omega^{r+r'}(M)$ 
- the exterior multiplication,  $\Omega^0(M)-$ bilinear map, 
\item  $\iota^r_X: \Omega^\ast(M)\to \Omega^{\ast-1}(M)$ 
 - the contraction w.r. to $X\in \mathcal X(M),$ $\Omega^0(M)-$ linear map,
 \item  $d^r:\Omega^r (M)\to \Omega^{r +1}(M) $  
- the exterior differential, a differential operator of order one,
\item  $L^r_X: \Omega^r(M)\to \Omega^r(M)$  
- the Lie derivative along $X$, $X\in \mathcal X(M)$ defined by $L^r_X:= d^{r-1}\cdot \iota^r_X - \iota^{r+1}_X\cdot d^r$ 
a differential operator of order one.
\end{itemize}
   
The pair $(\Omega^\ast, d^\ast), \ast=0,1,\cdots$ is a cochain complex, known as the de-Rham complex, with  de-Rham cohomology 
$\mathcal H^r_{DR}(M):= \ker d^r/ \img d^{r-1}.$ 
The de-Rham  theorem establishes a canonical isomorphism from  de-Rham cohomology $\mathcal H^r_{DR}(M)$ to the singular  cohomology with real coefficients of $M,$ isomorphism  obtained via integration of $r-$differential forms on singular smooth simplexes. 

\vskip .1in

For simplicity we will assume $M$ orientable. 
The  Riemannian metric $g$ on $M$ provides
the Hodge star-operator  $\star^r: \Omega^r(M^n)\to \Omega^{n-r}(M)$ with $\star^{n-r}\cdot \star^r= (-1)^{r(n-r)}$ and then the scalar product 
\begin{equation} \label {E1}\
\langle \omega, \omega'\rangle  :=\int _M \omega \wedge \star^r \omega'
\end{equation}
on each $\Omega^r(M).$ 
This in turn provides  the differential operator 
$\delta^r:\Omega^r(M^n)\to \Omega^{r-1}(M),$ the formal adjoint of $d^{r-1},$ defined by $\delta^r:= (-1)^{n(r-1)+1} \star^{n-r+1} \cdot  d^{n-r} \cdot \star^r$ and 
then the operator $\Delta^r:\Omega^r(M)\to \Omega^r(M).$  This is a formally self adjoint, nonnegative definite differential  operator of order two defined by 
$$\Delta^r:= \delta^{r+1}\cdot d^r + d^{r-1}\cdot \delta^r$$ called  the 
$r-$Laplace - Beltrami operator or simpler  the $r-$Laplacian.

Note that $$\star^q \cdot \Delta^q= \Delta^{n-q}\cdot \star^{n-q}.$$ 

Note also that one has   following commutative up to sign diagram 

\begin{equation}\label {E2.2}
\xymatrix{
\cdots\ar[r]&\Omega^q\ar[r]^{d^q} \ar[d]_{\star ^q}&\Omega^{q+1}\ar[r]^{d^{q+1}}\ar[d]_{\star^{q+1}} &\Omega^{q+2}\ar[r]^{d^{q+2}}\ar[d]_{\star^{q+2}} &\cdots\\ 
\cdots\ar[r]&\Omega^{n-q}\ar[r]^{\delta^{n-q}} &\Omega^{n-q-1}\ar[r]^{\delta^{n-q-1}} &\Omega^{n-q-2}\ar[r]^{\ \delta^{n-q-2}} &\cdots}
\end{equation} 
with 
$$\star^{q+1}\cdot d^q=  (-1)^{q+1}\delta^{n-q}\cdot \star^q . $$

Since 
the homology of the second row of the diagram  (\ref {E2.2}) is  canonically identified via the metric $g$ to the de-Rham homology of $M$ based on currents,  
 the $\star-$operator realizes the familiar Poincar\'e duality from $H_{DR}^r(M)$ to $H^{DR}_{n-r}(M).$

Since $M^n$ is a closed manifold then 
{\it formally adjoint} becomes  {\it adjoint} on $L_2(\Omega^q(M))$ the $L_2-$closure of $\Omega^q(M)$ with respect to the scalar product (\ref{E1}). 

Each $\Delta^r$ has spectrum consisting of the infinite sequence of eigenvalues, nonnegative real numbers $ \lambda^r_k,$ \ 
$0= \lambda_0 ^r  < \lambda ^r_1 < \cdots  < \lambda_k^r   < \lambda ^r_{k+1}< \cdots,$ \ 
each with finite multiplicity, which increase to $\infty.$ 

Denote by  $H^r_k\subset \Omega^r(M)$  the eigenspace corresponding  to the eigenvalues $\lambda^k_r,$ $H^r_k :=\ker (\Delta^r-\lambda^r_k Id)\subset \Omega^r(M)$ and note that $H^r_k\perp H^r_{k'}$ for $k\ne k'.$   

Let $H^r_{0,\mathbb Z} \subset H^r_0 \subset \Omega^r (M)$ be the lattice of  {\it integral harmonic $r-$forms}  i.e harmonic forms which take integer values when integrated on $\mathbb Z-$cycles of dimension $r,$ and let $T_r: =H^r_0/ H^r_{0,\mathbb Z}$  be the compact torus equipped with the obvious Riemannian metric induced from the scalar product 
$\langle\  ,\ \rangle$ restricted to $H^r_0.$   

Denote by $V^r :=vol_g T_r.$  Note that $V^0=1, V^n=vol_g  M^n$ and let $$\mathbb V(M,g):=\prod_i (V^i)^{(-1)^i}.$$

%
%
%

The table  
\begin{equation*}
\begin{aligned}
0 =      &\lambda_0 ^r  <   &\lambda ^r_1 <   &\cdots  <  &\lambda_k^r   < \ \ &\lambda ^r_{k+1}<  &\cdots \\ 
           &H_0^r              \    &H^r_1            \   & \cdots   \   &H^r_k              \  \ \ &H^r_{k+1}         \      &\cdots
\end{aligned}           
\end{equation*} 
with $H^r_i:=  \ker (\Delta^r- \lambda^r_i Id)$, 
 is referred to as the {\it spectral package} of $(M,g).$  Note that the spectral package corresponding to $q$ identifies via $\star^q-$operator to the spectral package  corresponding to $(n-q).$  
 
 Sometimes $H^r_i$ is specified by an orthonormal base.  
 Sometimes the table {\it spectral package}  might  contain the numbers  $V^r$'s. 
 Any finite part of the spectral package is, in principle, computable with arbitrary accuracy but not the entire spectral package,.

Hodge theorem establishes a canonical decomposition  $$\Omega^r(M)=\Omega^r_+(M) \oplus \Omega^r_-(M)\oplus H^r_0$$ in mutually orthogonal subspaces 
  $\Omega^r(M)_+=\img d^{r-1},$  $\Omega^r(M)_-=\img \delta^{r+1}$ and $H^r_0= \ker \Delta^r= \ker d^r\cap \ker \delta^r,$ decomposition which diagonalizes $\Delta^r$  and satisfies $\star^r: \Omega^r(M)_\pm\to \Omega^{n-r}(M)_\mp$ 
and which implies the decomposition $H^r_k= H^r_{k,+}\oplus  H^r_{k,-}$ with $H^r_{k,\pm}= H^r_k\cap \Omega^r(M)_{\pm}.$
This shows that a Riemannian metric realizes de-Rham cohomology canonically as a subspace of differential forms referred to as {\it harmonic forms} and one has the morphisms of co-chain complexes 
$$ \xymatrix {(H_0^\ast, 0)\ar[r]^{in}& (\Omega^\ast(M), d^\ast)\ar[r]^{pr}& (H^\ast_0, 0)}$$ with composition the identity.

In view of the ellipticity and nonnegativity of $\Delta^r$ one has the well defined positive real numbers, the zeta-regularized determinants   $\det \Delta^r_{\pm}, \det' \Delta^r,$ which represent the   $\zeta-$regularized product of the nonzero eigenvalues of $\Delta^r_{\pm},  \Delta^r,$ 
\footnote { $\Delta^r_{\pm}= \Delta^r  |_{\Omega^r_{\pm}(M)},$} cf \cite {Cheeger} 
and then 

$$T_{an}(M,g):= \prod_r     (\sqrt {{\det}' \Delta^r}\ )^{(-1)^{r+1}  r}   = \prod_r  (\sqrt{{\det } \Delta_+^r}\  )^{(-1)^{r+1}}$$

referred to as the {\it analytic torsion}.   
   
Recall that for any compact ANR $X,$  in particular for any compact manifold, the integral homology $H_r(X;\mathbb bZ)$ is a finitely generated abelian group of a finite rank $\beta_r$ whose set of finite order elements have finite cardinality $\mathbb Tor_i.$ 
The following two numbers are remarkable topological invariants 

$$\chi(X):=\sum (-1)^i \beta_i(X)\  \text {and} \ \mathbb Tor(X):=\prod (\mathbb Tor_i(X))^{ (-1)^i}     .$$

%
%

Note that when $X=M$ is a closed odd dimensional manifold $\chi(M)=0$  and
when $X$ is an oriented even dimensional manifold $\mathbb Tor(M)=1.$  
 
 The following two familiar results are among the many relations between topology and the spectral packages.
 
 \begin{theorem}\label {T2.1} \  
 \begin{enumerate} 
 \item (de-Rham  -  Hodge)  $\chi(M)= \sum_i (-1)^i \dim H^i_0$
 \item (Cheeger Muller) $\ln \mathbb Tor (M)=- \sum _i (-1)^{i} \ln V^i + \ln T_{an} (M,g)$ 
 \end{enumerate} 
 \end{theorem}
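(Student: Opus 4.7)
For part (1), the plan is short: I would use the Hodge decomposition $\Omega^r(M) = \img d^{r-1} \oplus \img \delta^{r+1} \oplus H^r_0$ recalled above to identify $H^r_0$ with the de-Rham cohomology $\mathcal{H}^r_{DR}(M)$, and then the de-Rham theorem to identify $\mathcal{H}^r_{DR}(M)$ with singular cohomology $H^r(M;\mathbb{R})$, whose dimension is the Betti number $\beta_r$. Taking the alternating sum of dimensions gives $\chi(M) = \sum_i (-1)^i \beta_i = \sum_i (-1)^i \dim H^i_0$.

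Part (2), the Cheeger--M\"uller theorem, is the substantive statement; within the framework of the paper I would prove it via the Witten-deformation strategy of Bismut--Zhang. The plan is to introduce, for each $t \geq 0$, a deformed analytic torsion $T_{an}(M,g,t)$ built from the $\zeta$-regularized determinants of the restrictions of $\Delta^q(t)$ to the positive-eigenvalue subspace; at $t=0$ this reproduces $T_{an}(M,g)$. The key structural input is the clustering $\mathcal{A}^q = \sqcup_k \mathcal{A}^q(k)$: for $t$ large the spectrum splits into the virtually small part $\mathcal{A}^q(0)$ (bounded eigenvalues, in fact converging to $0$) and a remainder growing linearly in $t$, so the $\zeta$-determinant factors, up to a controlled error, into a small-cluster piece and a large-cluster piece.

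The large-cluster piece, by Proposition \ref{P5.1} together with the harmonic-oscillator model of Theorem \ref{T3.3}, admits an asymptotic as $t \to \infty$ dictated entirely by the multivariable harmonic oscillators at the critical points; because each critical point of index $q$ contributes symmetrically across degrees via the oscillator spectrum, the alternating product defining $T_{an}$ cancels these contributions. The small-cluster piece, by Theorem \ref{T6.1}, converges to the combinatorial torsion of the Morse complex computed with the orthonormal critical-point basis on chains and the harmonic-form basis on cohomology. Re-expressing this torsion in an integral basis on $H^r_{0,\mathbb{Z}}$ yields exactly the correction factor $\prod_i (V^i)^{(-1)^i}$, and the remaining Reidemeister torsion of the integral Morse complex over $\mathbb{R}$, relative to integral bases on chains and cohomology, equals $\mathbb{Tor}(M)$ by a standard algebraic identity for finite free $\mathbb{Z}$-complexes. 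Assembling these pieces produces the stated formula.

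The main obstacle is the rigorous factorization of the $\zeta$-determinant into small- and large-cluster pieces uniformly in $t$: this requires heat-trace estimates comparing $e^{-s\Delta^q(t)}$ to its small-cluster projection over all of $s \in (0,\infty)$, with uniform control in both the small-$s$ (heat-kernel asymptotics) and large-$s$ (spectral-gap) regimes. This is the technical heart of the Bismut--Zhang argument, and without it the cancellation sketched above is only formal. A secondary difficulty is arranging the harmonic-oscillator determinants in a form compatible with the alternating-product cancellation in the general (non-flat) case, which for the flat model covered by Theorem \ref{T3.3} is immediate but otherwise needs additional uniform estimates near each critical point.
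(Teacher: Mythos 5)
Two preliminary remarks. First, the paper does not prove Theorem \ref{T2.1} at all: both items are quoted as classical results, with item (2) referred to Cheeger \cite{Cheeger}, Theorem 8.35 (and, implicitly, M\"uller and Bismut--Zhang \cite{BZ}), so there is no in-paper argument to match yours against. Second, your treatment of item (1) is the standard and complete one (Hodge theorem plus de Rham theorem plus the alternating sum), and nothing more is needed there.

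For item (2), however, what you have written is a roadmap to the known Witten-deformation proof (essentially \cite{BFK3} and \cite{BZ}), not a proof, and two of its pivotal steps are either missing or mischaracterized. The claim that the large-cluster contribution to the deformed torsion ``cancels'' in the alternating product because the oscillator spectra at a critical point are distributed symmetrically across degrees is not correct as stated: the large-spectrum part of $\sum_q(-1)^q q\,\ln{\det}'\Delta^q(t)$ has a nontrivial asymptotic expansion in $t$ and $\ln t$ whose divergent terms are exactly what must be matched against the scaling factors $(\pi/t)^{(n-2q)/4}e^{-tq}$ of $\mathcal S^q(t)$ in (\ref{E4.1}), and whose constant (``free'') term must be shown to vanish --- this vanishing is one of the genuinely hard points of \cite{BFK3}/\cite{BZ}, proved there by anomaly arguments, not by degree-wise symmetry of the harmonic-oscillator spectrum. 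Separately, your scheme never connects $t=0$ to $t\to\infty$: since $d^\ast(t)=e^{-tf}d^\ast e^{tf}$, the deformed torsion $T_{an}(M,g,t)$ is \emph{not} constant in $t$, and one needs the anomaly (variation) formula for $\frac{d}{dt}\ln T_{an}(M,g,t)$, together with the correction coming from the induced metrics on cohomology --- which is precisely where the volumes $V^i$ of the tori $H^i_0/H^i_{0,\mathbb Z}$ enter the final identity --- before the large-$t$ analysis via Theorems \ref{T3.3} and \ref{T6.1} can say anything about $T_{an}(M,g)=T_{an}(M,g,0)$. Combined with the uniform heat-trace estimates you yourself flag as missing, the argument as proposed establishes nothing beyond what is already in the cited literature; for the purposes of this paper the correct move is the paper's own, namely to cite Cheeger--M\"uller (or Bismut--Zhang) rather than to rederive it.
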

 Item (1) derives the integer $\chi(M)$  from the multiplicity of the eigenvalue $0$ of $\Delta^r$s while item (2) derives the integer $\mathbb Tor(M)$ from the nonzero eigenvalues of  $\Delta^r$s cf. 
\cite {Cheeger} Theorem 8.35.
 
\section  {Witten Deformation and the spectral package of a triple $(M,g,f)$} \label {S;wittendef}

Let $f: M\to \mathbb R$  be a smooth function.
The {\it Witten deformation } 
is a one parameter family of scaled de-Rham complexes, precisely  the family of cochain complexes 
$(\Omega^\ast(M), d^\ast (t)), \  t\in \mathbb R,$  with the differential $$d^q(t)\omega = e^{-t f}d^q (e^{tf}\omega )=  d^q \omega +t df\wedge \omega, \ \omega\in \Omega^q(M).$$     
 The  Laplacians $ \Delta^{q}(t):= d^{q-1}(t)\cdot  \delta^q(t)  +  \delta^{q+1}(t) \cdot d^q(t)$ with  $\delta^q(t): = (-1) ^{n(q-1) +1}\star^{n-q+1} \cdot  \
 d^{n-q}(t) \cdot \star ^q$ referred below as the {\it Witten Laplacians}
can be expressed in terms of the Laplacians $\Delta^q$ as follows:
 \begin{equation} \label {E3.1}
\Delta^q(t)=  \Delta^q + t (L^q_X+ \mathcal L^q_X) + t^2||X||^2   .\end{equation}
 These operators remain self adjoint, nonnegative elliptic operators  on $L_2(\Omega^q(M)),$ the $L_2-$ completion of $\Omega^q(M).$  Here 
$\mathcal L_X^q$ its  formal adjoint of $L^q_X$ given by $\mathcal L_X^q:= (-1)^{(n+1) q+1} \star^{n-q} \cdot L^{n-q}_X \cdot \star^q .$   
  
  Hodge decomposition continues to hold for any $t,$  namely
  $$\Omega^q(M)=\Omega^r_+(M)(t) \oplus \Omega^r_-(M)(t)\oplus H^r_0(t)$$ 
with mutually orthogonal   
 $\Omega^r_+(M)(t)=\img d^{r-1}(t),$  $\Omega^r_-(M)(t)=\img \delta^{r+1}(t)$ and $H^r_0(t)= \ker \Delta^r(t)= \ker d^r(t)\cap \ker \delta^r(t),$ decomposition which diagonalizes $\Delta^r(t).$   
Note that in view of the isomorphism of $(\Omega^\ast (M), d^\ast)$ and $(\Omega^\ast (M), d^\ast (t))$  we have the isomorphism of their cohomology and then the equality 
\begin{equation}\label {E3.2}
\dim H^r_0= \dim H_0^r(t).
\end{equation}
Note that the diagram \ref{E2.2} can be enhanced  by replacing $d^\ast$ by $d^\ast (t)$ and   $\delta^\ast$ by $\delta^\ast (t).$

%

\vskip .1in
 
 As a consequence  of (\ref{E3.1}), by a result of Rellich - Kato, Theorem 3.9, chapter 7, in \cite {TK} one has Theorem.\ref{T3.1} below.
 \begin{theorem} (Rellich - Kato)\label {T3.1}\
 
 There exists a collection of nonnegative real-valued  functions  $\lambda^q_\alpha(t)$ unique up to permutation and a collection of  norm one $q-$differential form-valued maps  $\omega^q_\alpha(t) \in \Omega^q(M) ,$ analytic in $t\in \mathbb R,$  indexed by  $\alpha \in \mathcal A,$ $\mathcal A$ countable set, each with holomorphic extension to a neighborhood of the real line $\mathbb R\subset \mathbb C$ \footnote {holomorphic  extension means extensions $\lambda^q(z)\in \mathbb C$ , $\omega^q(z)\in \Omega(M)\otimes C$ for $z$ in a neighborhood of $\mathbb R$ in 
 $\mathbb C$ which for $t\in \mathbb R$ is a real number and $\omega^q(t)\in \Omega(M)\otimes 1$ } %
  s.t.
 \begin{enumerate}
 \item $\Delta^q(t)\omega^{q}_\alpha (t)= \lambda^{q}_\alpha (t)\omega^{q}_\alpha (t),$  
\item for any $t$ the collection $\lambda^{q}_\alpha (t)$ represent all repeated  eigenvalues of $\Delta^q(t)$  and the collection $\omega^{q}_\alpha (t)$ form a complete orthonormal family  of associated eigenvectors for the operator $\Delta^q(t),$  
\item exactly $\beta_q= \dim H^q(M;\mathbb R)$ eigenvalue functions $\lambda^q_\alpha(t)=0$ for any $t$, all other are strictly positive for any $t.$ 
\end{enumerate}
\end{theorem}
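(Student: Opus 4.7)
The plan is to verify the hypotheses of the Rellich--Kato theorem for the one-parameter family $\Delta^q(t)$ and then use the Witten-deformation identity $\dim H^q_0(t)=\beta_q$ (equation (3.2)) to read off item (3).

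First I would check that $\{\Delta^q(t)\}_{t\in\mathbb R}$ extends to a self-adjoint holomorphic family of type A in $z\in\mathbb C$. Formula (3.1) shows
\begin{equation*}
\Delta^q(z)=\Delta^q+z\bigl(L^q_X+\mathcal L^q_X\bigr)+z^2\|X\|^2,\qquad X=\grad_g f,
\end{equation*}
which is a polynomial in $z$ with operator coefficients. The zeroth-order term is the multiplication $\|X\|^2$, the first-order term $L^q_X+\mathcal L^q_X$ is a differential operator of order one, and the leading term $\Delta^q$ is an elliptic operator of order two with fixed domain $H^2(\Omega^q(M))\subset L_2(\Omega^q(M))$. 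By standard relative-boundedness arguments, the perturbation $z(L^q_X+\mathcal L^q_X)+z^2\|X\|^2$ is $\Delta^q$-bounded with arbitrarily small relative bound on any compact subset of $\mathbb C$, so the domain of $\Delta^q(z)$ is the same as that of $\Delta^q$ for every $z$. Hence the family is of type A. Self-adjointness for $z\in\mathbb R$ follows from the self-adjointness of $\Delta^q$, $L^q_X+\mathcal L^q_X$, and the multiplication operator $\|X\|^2$.

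Next I would invoke Rellich--Kato (\cite{TK}, Ch.\,VII Thm.\,3.9). Since $M$ is closed, $\Delta^q(t)$ is elliptic, hence has compact resolvent and discrete spectrum of finite multiplicity accumulating only at $+\infty$. For such a self-adjoint holomorphic family of type A on the real line, Kato's theorem produces a countable collection of globally real-analytic branches $\lambda^q_\alpha(t)$ exhausting the spectrum with multiplicity, together with a global orthonormal system of analytic eigenform-valued maps $\omega^q_\alpha(t)$ satisfying (1) and (2). The global (as opposed to merely local) analyticity is a feature of the self-adjoint setting: eigenvalue crossings can be resolved by permutations of labels, and because the eigenvalues are real there is no monodromy obstruction to threading the branches across crossings. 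The holomorphic extension to a neighborhood of $\mathbb R\subset\mathbb C$ comes directly from the polynomial nature of $\Delta^q(z)$ together with the fact that isolated eigenvalues of a type-A family remain holomorphic under small complex perturbation.

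Finally, for item (3) I would argue as follows. At $t=0$, Hodge theory gives $\dim\Ker\Delta^q(0)=\dim H^q_0=\beta_q$, so exactly $\beta_q$ of the branches satisfy $\lambda^q_\alpha(0)=0$. By equation (3.2), $\dim H^q_0(t)=\beta_q$ for every $t\in\mathbb R$, so the total number of branches vanishing at any given $t$ is again $\beta_q$. If one of the $\beta_q$ branches that vanish at $t=0$ were not identically zero, analyticity would force it to be strictly positive on a punctured neighborhood of $0$, dropping the count of zero branches below $\beta_q$ there, in contradiction to (3.2); hence all $\beta_q$ of them are identically zero. Conversely, if any other branch $\lambda^q_\alpha$ satisfied $\lambda^q_\alpha(t_0)=0$ at some $t_0$, then the total number of zero branches at $t_0$ would exceed $\beta_q$, again contradicting (3.2). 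Therefore the remaining branches are strictly positive for every $t\in\mathbb R$.

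The main obstacle is the global analyticity statement: local analytic parametrization near each $t_0$ is standard perturbation theory, but organizing the local branches into globally defined analytic functions on all of $\mathbb R$ requires the self-adjoint structure to rule out algebraic branching at crossings. This is precisely the content invoked in Kato's theorem; the remaining verifications (type A, kernel count via (3.2)) are comparatively routine.
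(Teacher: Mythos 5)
Your proposal is correct and follows essentially the same route as the paper, which obtains Theorem 3.1 directly from formula (3.1) (the family is holomorphic of type A) via Kato's Theorem 3.9, Chapter VII, and reads off item (3) from the Hodge decomposition for $\Delta^q(t)$ together with equation (3.2). You simply spell out the relative-boundedness check for type A and the zero-branch counting argument that the paper leaves implicit.
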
 

The analytic functions $\lambda^q_\alpha(t)$ and the analytic differential form-valued maps $\omega^q_\alpha (t)$ are called {\it eigenvalue  branches}  and {\it eigenform branches}
and the collection $\{\lambda^q_\alpha (t), \omega^q_\alpha(t), \alpha\in \mathcal A\}$ is called the {\it spectral package  of $(M,g,f).$} 
\vskip .1in
We are interested in these branches when $f:M\to \mathbb R$ is a Morse function on $M.$  This means that for any $x\in Cr(f):=\{x\in M\mid df_x=0\}$ in some local chart $\varphi_x: U_x\to \mathbb R^n, U_x$ open neighborhood of $x,$ $\varphi_x$ open embedding with $\varphi_x(x)= (0,,0,\cdots 0)$ one has
\begin{equation}\label {E3.3}
f\cdot \varphi_x^{-1}(x_1,x_2,\cdots x_n)= f(x)-1/2 \sum_{1\leq i\leq k}  x_i^2 + 1/2 \sum_{k+1\leq i\leq n}x^2_i.
\end{equation}
In this case $x$ is called {\it critical point of Morse index $k$}.  Denote by $Cr_k(f)\subset Cr(f)$  the set of critical points of Morse index $k.$ A chart $\varphi$ with such property for a critical point is called {\it Morse chart} in which case the coordinates  $(x_1, x_2, \cdots x_n)$ are called {\it Morse coordinates}.

From now on, mostly for simplicity in estimates, we will assume  that  in a small neighborhood of  critical  points, there exists a   Morse chart  $\varphi$ s.t. the metric $g$ is given by $g_{i,j}(x_1, \cdots x_n)= \delta_{i,j}.$ Such metric  is called  {\it flat near critical points.}  We believe this assumption  can  be removed or at least weakened.
\vskip .1in

The following result  is credited  to E. Witten; a proof will be sketched in Section  \ref{S5} following close  \cite {BFKM} section 5 where a similar statement under more general hypotheses is treated.
 
\begin{theorem}\label {T3.2}
 Suppose $(M,g)$ is a closed Riemannian manifold and $f:M\to \mathbb R$ a Morse function with $c_q$ critical points of Morse index $q$.
 There exists constants $C_1, C_2, C_3$ and $T_0$ depending on $(M,g,f)$ s.t. for any $t>T_0$
 \begin{enumerate}
\item $Spect \Delta_q (t) \cap (C_1 e^{-C_2 t}, C_3 t)=\emptyset$ and $1\in (C_1 e^{-C_2 t}, C_3 t),$
\item for $t>T_0$ the number of eigenvalue branches  $\lambda^q_\alpha(t)$ with $\lambda^q_\alpha(t)\in [0, C_1 e^{-C_2t})$ 
counted with  multiplicity, is 
exactly  $c_q$  the number of critical points of Morse index $q,$
\item  any eigenform branch  $\omega^q_\alpha(t)$ with  eigenvalue branch $ \lambda^q_\alpha(t) <1$ for $t>T_0$  localizes at the set $Cr(f)_q,$ 
i.e. $\lim_{t\to \infty}  ||\omega^q_{\alpha}(t)||_{L_2 (M\setminus U)}= 0$ for any open neighborhood $U$ of  $Cr_q(f).$
\end{enumerate}
 \end{theorem}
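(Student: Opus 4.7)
The plan is to follow Witten's original strategy. Near each critical point $x$ of index $k$, the flat-metric assumption turns formula (\ref{E3.1}) into an exact sum of one-dimensional harmonic oscillators twisted by a fermionic term: on $q$-forms the spectrum is $t\cdot(2N+c_\alpha)$ with $c_\alpha\geq 0$, with a unique zero mode $c(t)\,e^{-t|y|^2/2}\,dy_1\wedge\cdots\wedge dy_k$ (existing iff $q=k$) and next eigenvalue $\geq 2t$. For each $x\in Cr_q(f)$, I build an approximate ground state
\begin{equation*}
\widetilde\omega_x(t) = c_x(t)\,\phi_x(y)\cdot e^{-t|y|^2/2}\,dy_1\wedge\cdots\wedge dy_q,
\end{equation*}
where $\phi_x$ is a smooth cut-off supported in a flat Morse chart around $x$ and equal to $1$ near $x$, and $c_x(t)$ is an $L_2$-normalizing constant. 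Since the model eigenform is exact, $\Delta^q(t)\widetilde\omega_x(t)$ is supported on the shell where $d\phi_x\neq 0$, on which $|y|$ is bounded away from $0$; the Gaussian factor therefore forces $\|\Delta^q(t)\widetilde\omega_x(t)\|_{L_2}=O(e^{-ct})$ and $\langle\widetilde\omega_x(t),\widetilde\omega_{x'}(t)\rangle=O(e^{-ct})$ for $x\neq x'$. Set $\mathcal V^q(t):=\Span\{\widetilde\omega_x(t):x\in Cr_q(f)\}$, of dimension $c_q$ for $t$ large.

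The min--max principle applied on $\mathcal V^q(t)$ then produces at least $c_q$ eigenvalues of $\Delta^q(t)$ in $[0,C_1 e^{-C_2 t})$, giving half of (2) and, since $1>C_1 e^{-C_2 t}$ for $t>T_0$, the membership $1\in(C_1 e^{-C_2 t},C_3 t)$ once (1) is established. For the complementary lower bound I would use IMS localization: pick a partition $\chi_0^2+\chi_1^2=1$ with $\chi_1$ supported in a small neighborhood $U$ of $Cr(f)$ and $\chi_0$ supported away from $Cr(f)$, so that
\begin{equation*}
\langle\omega,\Delta^q(t)\omega\rangle = \sum_{j=0}^{1}\langle\chi_j\omega,\Delta^q(t)(\chi_j\omega)\rangle - \sum_{j=0}^{1}\||\grad\chi_j|\,\omega\|^2.
\end{equation*}
Off $U$ one has $\|df\|^2\geq\varepsilon>0$, so (\ref{E3.1}) yields $\Delta^q(t)\geq \varepsilon t^2-Ct\geq \tfrac12\varepsilon t^2$ for $t$ large; inside each component of $U$ the comparison with the exact harmonic oscillator model shows that after the $c_q$-dimensional kernel (spanned by $\mathcal V^q(t)$) is projected out, the remaining spectrum begins at $\geq 2t$. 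Absorbing the $O(1)$ cut-off correction yields $\Delta^q(t)\big|_{\mathcal V^q(t)^\perp}\geq C_3 t$, which completes (1) and the upper half of (2).

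For (3), any branch $\omega^q_\alpha$ with $\lambda^q_\alpha(t)<1$ is forced by (1) to satisfy $\lambda^q_\alpha(t)<C_1 e^{-C_2 t}$; inserting $\omega^q_\alpha$ into the IMS identity and using the off-$U$ bound gives $\|\chi_0\omega^q_\alpha\|^2\leq Ct^{-2}$, and iterating with a shrinking family of neighborhoods of $Cr_q(f)$ yields $\|\omega^q_\alpha(t)\|_{L_2(M\setminus U)}\to 0$. The main obstacle is the lower bound $\geq C_3 t$ near critical points: one must extract the factor $2t$ from the exact harmonic oscillator spectrum above its kernel, then glue with the $\tfrac12\varepsilon t^2$ bound away from $Cr(f)$ through the IMS shell where the operator is not well controlled by either model alone. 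The flat-metric hypothesis is precisely what makes the local model a decoupled harmonic oscillator, so the gluing error is a harmless $O(1)$; relaxing it would require the more delicate Helffer--Sj\"ostrand tunneling analysis to control the metric-dependent lower-order corrections to the fermionic term $t(L^q_X+\mathcal L^q_X)$.
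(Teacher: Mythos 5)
Your proposal is correct and follows essentially the same route as the paper: cut-off Gaussian quasi-modes at the index-$q$ critical points with exponentially small Rayleigh quotients, a lower bound of order $t$ on the orthogonal complement obtained by combining $t^2\|\grad f\|^2$ away from the critical set with the harmonic-oscillator gap $2t$ in the flat local models, and a counting/gap step (your IMS partition plus min--max plays the role of the paper's Lemma \ref{L5.3} together with the injectivity of $Q^q(t)\cdot J^q(t)$ and the estimates of Proposition \ref{P5.4}). The only point to tighten is item (3): excluding mass near critical points of index $k\neq q$ does not follow from shrinking neighborhoods of $Cr_q(f)$ but from the local bound $\geq 2t|q-k|$ at mismatched critical points (Proposition \ref{P5.4}(2)), an ingredient you already state and should invoke explicitly there.
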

 
 As a consequence 
 exactly $c_q$  eigenvalue  branches $\lambda^q_\alpha(t)$ go exponentially fast to $0$ when $t\to \infty,$ in particular 
 $\lim_{t\to \infty}\lambda^q_\alpha(t)/t=0$  and all other go more than  linearly fast to $\infty,$  in particular $\lim_{t\to \infty} \lambda^q_\alpha (t)= \infty.$   It can be shown that $\lambda^q_\alpha(t) \leq c_1 + c_2t + c_3 t^2$ cf \cite {SH}. Also, as a consequence, if $\lambda^q(t)$ is bounded for $t>0$ (say $\lambda^q(t)<1$) then $\lambda^q(t)$ goes exponentially fast to $0$ when $t\to\infty.$
 
 Denote by $\mathcal A^q_{vs}= \mathcal A^q(0):=\{\alpha\in A^q| \lim_{t\to \infty}  \lambda^q_\alpha(t)/t=0\}$ which is the same as the  set of  branches which go exponentially fast to $zero.$ These $c_q$ eigenvalue branches are called {\it virtually small} and the collection  $\{\lambda^q_\alpha (t), \omega^q_\alpha(t), \alpha\in \mathcal A\}$ the {\it virtually small spectral package of $(M,g,f).$}
    
Since $\Delta^q(-t)$ for $f$ is the same as $\Delta^q(t)$ for $-f,$  when $t\to -\infty$ exactly $\beta_{n-q}$ eigenvalue branches go exponentially fast to zero  the other more than linearly fast  to $\infty.$ As a consequence, as shown in a paper in preparation, if $\lambda^q(t)$ is bounbded then $\lambda^q(t)$ goes exponentially fast to $0$ when $t\to \pm \infty.$

 \begin{theorem}\label{T3.3}\

 If the metric $g$ is flat near the critical points of the Morse function $f$ then:
  \begin{enumerate}
 \item the set $\mathcal A^q\setminus \mathcal A^q(\infty)$ is in bijective correspondence with the collection of symbols 
 \newline $\{ \alpha= (x, I, P)\in Cr(f)\times \mathcal I^q \times \mathbb (Z_{\geq 0})^n \mid o(\alpha) \geq 0\}$ where
 
  $x\in Cr(f), I\in \mathcal I^q=\{(j_1, j_2,\cdots j_q)\mid 1\leq j_1 <j_2<\cdots j_q\leq n\},$
 
  $ P=(p_1,\cdots p_n), p_i\in \mathbb Z_{\geq 0}$ and
 
  $o(\alpha)= \sum_i p_i + q +index (x) -2N_{index (x)} (I)$  with $N_k(I):= \sharp\{j\leq k\mid j\in I\}.$ 
 
\item  each $\lambda^q_\alpha (t)$ satisfies  $$\lim _{t\to \infty} \lambda^q_\alpha (t)/ t= 2 o(\alpha)$$ 
\item  each $\omega^q_\alpha (t)$ localizes at $x.$  
\end{enumerate}
\end{theorem}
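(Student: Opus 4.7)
The plan is to reduce to an explicit local calculation near each critical point, exploit the exact solvability of the resulting multivariable harmonic oscillator, and then match the local picture to the Rellich--Kato branches of Theorem~\ref{T3.1} via a quasimode/min--max argument.

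\emph{Exact local diagonalisation.} Around each $x_0\in Cr_k(f)$ I would choose Morse coordinates $(y_1,\ldots,y_n)$ in which, by the flatness hypothesis, the metric is $\delta_{ij}$ and $f = f(x_0) + \tfrac12\sum_i \epsilon_i y_i^2$ with $\epsilon_i = -1$ for $i\le k$ and $\epsilon_i = +1$ for $i>k$. Starting from (\ref{E3.1}) and computing $L^q_X + \mathcal L^q_X$ directly in this chart yields, on a form $\phi(y)\,dy_I$, the exact identity
\begin{equation*}
\Delta^q(t)(\phi\,dy_I) = \Bigl[\sum_{i=1}^n \bigl(-\partial_{y_i}^2 + t^2 y_i^2\bigr) + t\sum_{i=1}^n \epsilon_i\bigl(2\,\mathbf{1}_{\{i\in I\}}-1\bigr)\Bigr]\phi\,dy_I,
\end{equation*}
so the operator splits into $n$ one-dimensional harmonic oscillators with a scalar shift equal to $t(2q+2k-n-4N_k(I))$. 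Since each factor $-\partial^2+t^2y^2$ has Hermite spectrum $\{(2p+1)t:p\in\mathbb Z_{\ge0}\}$ with eigenfunctions $H_p(\sqrt{t}\,y)\,e^{-ty^2/2}$, the flat-model operator on $L^2(\mathbb R^n,\Lambda^q)$ admits the complete orthonormal basis $\psi_{x_0,I,P}(y)\,dy_I$, where $\psi_{x_0,I,P}(y)=\prod_i H_{p_i}(\sqrt{t}\,y_i)\, e^{-t|y|^2/2}$, and the eigenvalue attached to $(I,P)$ equals $(2\sum p_i + n)t + t(2q+2k-n-4N_k(I)) = 2t\cdot o(\alpha)$. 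Since $N_k(I)\le\min(k,q)$, one has $o(\alpha)\ge|q-k|\ge 0$, so every triple is a valid symbol.

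\emph{Global matching via quasimodes.} For each $\alpha=(x,I,P)$ I would set $\widetilde\omega_\alpha = Z_\alpha^{-1}\,\eta_x\,\psi_{x,I,P}(y)\,dy_I$, with $\eta_x$ a cut-off supported in the flat Morse chart, equal to $1$ near $x$, and $Z_\alpha$ an $L^2$-normalization. Because the Gaussian factor concentrates in a ball of radius $O(t^{-1/2})$, the derivatives of $\eta_x$ act only where $\psi_{x,I,P}$ is already exponentially small, so $\|(\Delta^q(t)-2t\,o(\alpha))\widetilde\omega_\alpha\|_{L^2(M)} = O(t^{N(\alpha)}e^{-ct})$ for some $c>0$, and distinct quasimodes are asymptotically orthonormal. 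Now fix $K>0$; the min--max principle applied to the span of $\{\widetilde\omega_\alpha : o(\alpha)\le K\}$ produces at least $\#\{\alpha:o(\alpha)\le K\}$ eigenvalues of $\Delta^q(t)$ in an interval $[0,2Kt+o(t)]$. Conversely, an Agmon-type exponential-decay estimate away from $Cr(f)$ forces every true eigenform with eigenvalue $\le 2Kt$ to be $L^2$-close to that span, giving the reverse bound. Combined with the analyticity of each Rellich--Kato branch, this yields a bijection between $\{\alpha:o(\alpha)\le K\}$ and $\{\alpha\in\mathcal A^q\setminus\mathcal A^q(\infty):\lim \lambda^q_\alpha(t)/t\le 2K\}$ matching the asymptotic eigenvalues; letting $K\to\infty$ establishes items (1) and (2), and item (3) follows from the $L^2$-closeness of each true eigenform to its localized quasimode.

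\emph{Main obstacle.} The proof of Theorem~\ref{T3.2} only requires a single spectral gap $(C_1 e^{-C_2 t}, C_3 t)$, whereas here one must rule out stray eigenvalues in every interval $[0,2Kt]$ as $K$ grows, which amounts to showing exponential concentration of all eigenforms whose eigenvalue is $O(t)$. The appropriate tool is an Agmon weight built from the distance to $Cr(f)$ in the degenerate metric $|X|^2 g$, with constants depending on $K$ in a controlled way; organizing this estimate uniformly across scales $K$ and marrying it to the exact flat-model diagonalisation is the technical heart of the argument.
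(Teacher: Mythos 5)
Your proposal is correct in outline, and its first half coincides with the paper's: the exact diagonalisation of the flat local model (your shift $t(2q+2k-n-4N_k(I))$ and eigenvalues $2t\,o(\alpha)$ are precisely Propositions~\ref{P5.1} and \ref{P5.2}), and your cut-off Hermite quasimodes are the forms $\overline\omega^q_{x,I,P}(t)$ of Section~\ref{S5}. Where you genuinely diverge is the counting mechanism. The paper applies the two-subspace gap criterion Lemma~\ref{L5.3} to $A=(1/t)\Delta^q(t)$, with $H_1$ the span of the quasimodes having $o(\alpha)\le N$ and $H_2=H_1^\perp$; an analogue of Proposition~\ref{P5.4} supplies the quadratic-form bounds giving gaps $(2N+C_1t^{-\epsilon_1},\,2N+2-C_2t^{-\epsilon_2})$ for the scaled operator, and the exact count in each cluster then comes, as in the proof of Theorem~\ref{T3.2}, from injectivity of the projected quasimode map $Q^q(t)\cdot J^q(t)$. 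You instead use min--max for the lower count and Agmon decay plus local spectral comparison for the upper count; this is essentially the route of \cite{CFKS}, section 11.5, and \cite{Sh}, which the paper itself cites as an alternative derivation. The trade-off: the paper's route avoids decay estimates for true eigenforms but must bound the form from below on the orthogonal complement of the quasimode span (an IMS-type estimate, cf.\ \cite{BFKM}); yours needs weighted decay for eigenforms with eigenvalues of size $O(t)$ (your Agmon weight works because $t^2\|df\|^2$ dominates $Kt$ outside $O(t^{-1/2})$-neighborhoods of $Cr(f)$), but then yields the localization statements more directly.

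Two steps should be made explicit. First, threshold counting gives, for each $j$, exactly $\sharp\{\alpha\mid o(\alpha)=j\}$ eigenvalues in a band around $2jt$ and a genuine gap between bands; to obtain item (2) you must still argue that each bounded branch has a limit of $\lambda^q_\alpha(t)/t$, which follows from this eventual gap structure together with continuity of the Rellich--Kato branches --- ``combined with analyticity'' as written elides this. Second, your final sentence overstates what closeness gives for item (3): the Agmon and local-model comparison show each true eigenform is close to the span of all quasimodes with the same model eigenvalue, which may involve several critical points; when distinct critical points share an $o$-value, tunnelling can delocalise individual analytic eigenforms, so localization at the single $x$ of the symbol requires an additional argument (the paper's sketch defers this point as well).
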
 
 A  proof of Theorem \ref{T3.3} can  probably be derived from \cite{Sh} Theorem 1.1, 
 or from  \cite {CFKS}, section 11.5  formula (11.36). 
 A proof on the lines of the proof of Theorem \ref{T3.2}  is suggested in Section \ref{S5} and will be detailed in a paper in preparation. 
\vskip .1in 

\section {CW-complex structure associated with $(M,g,f)$ and the geometric complex}\label {S:CWstructure}

For $(M^n, g,f)$ with $(M^n,g)$ a closed Riemannian  and $f:M\to \mathbb R$ a Morse function denote by $\gamma_y(t)$ the trajectory of the vector field $X= -\grad_g f$ with $\gamma_y(0)=0.$  

For $x\in Cr(f)$ the set $W^\pm_x:=\{y\in M\mid \lim_{t\to \pm\infty} \gamma_y(t)=x\}$ is called the stable/unstable set of $x$ and is a submanifold diffeomorphic to $\mathbb R^{n-k}/ \mathbb R^{k}.$ 
 
 One  says that the vector field $X$ above is {\it Morse-Smale} if for any $x,y\in Cr(f)$  the unstable set $W^-_x$ and the stable set  $W^+_y$ are transversal which implies that $\mathcal T(x,y)= (W^-_x\cap W^+_y)/ \mathbb R$ \footnote {$\mathbb R$ acts freely by translation along the flow defined by $-\grad_gf$},
 the space of trajectories from $x$ to $y,$ is a manifold of dimension $\ind (x)- \ind (y)-1.$ 
For any string of critical points $x=y_0, y_1, \cdots,  y_k$ with
$$\ind (y_0) > \ind (y_1)>,\cdots,> \ind (y_k),$$
consider the smooth manifold of dimension  $\ind (y_0)-k,$
$$\mathcal I(y_0,y_1)\times\cdots \mathcal I(y_{k-1},y_k)\times W_{y_k}^-,$$
and the smooth
map $$i_{y_0, y_1,\cdots ,y_k}: \mathcal I(y_0,y_1)\times\cdots
\times \mathcal I(y_{k-1},y_k)\times W_{y_k}^-
\to M,$$
defined by
$i_{y_0, y_1,\cdots ,y_k}(\gamma_1,\cdots, \gamma_k, y):= i_{y_k}(y)$,
where
$i_x: W_x^- \to M$ denotes the inclusion of
$W_x^-$ in $M.$

\begin{theorem}\label {T4.1}\

1)For any critical point $x\in Cr(h)$
the smooth manifold $W_x^-$ has a canonical compactification
$\hat W_x^-$ to a
compact manifold
with corners \footnote {Recall that an $n-$dimensional  manifold $X$ with corners is a paracompact
Hausdorff space
equipped with a
maximal smooth atlas with charts
$\varphi : U\to \varphi(U)\subseteq \Bbb R^n_+$
with $ \Bbb R^n_+ = \{(x_1, x_2,\cdots x_n) | x_i\geq 0\}.$  The
collection of points
of $ X $
which correspond (by some and then  any chart) to points in
$\Bbb R^n$
with exactly $k$ coordinates equal to zero is
a well defined subset of $X,$ called the $k-$boundary, and  denoted by $X_k.$ It
has a structure of a smooth
$(n-k)-$dimensional manifold.
The subset $\partial X = X_1 \cup X_2 \cup \cdots X_n$
is a closed subset which is a topological manifold of dimension $(n-1)$ and the pair
$(X,\partial X)$
is a topological manifold with boundary $\partial X.$ A compact
smooth manifold
with corners 
with interior diffeomorphic to the Euclidean space,
will be called a compact smooth cell and is homeomorphic to the compact unit disc of the same dimension.  
When $\dim W^-_x=3$ in order to establish this result  apparently 
one needs the recently proven  Poincar\'e conjecture.}  
 and the inclusion $i_x$ has a smooth extension
$\hat i_x: \hat W_x^- \to M$ so that :

\noindent (a) the $k-$ boundary is $(\hat W_x^-)_k = \bigsqcup_{(x,y_1,\cdots, y_k)}
\mathcal I(x,y_1)\times \cdots\times
\mathcal T(y_{k-1}, y_k)\times W_{y_k}^-,$

\noindent (b) the restriction of $\hat i_x$ to
$\mathcal I(x,y_1)\times \cdots \times
\mathcal T(y_{k-1}, y_k)\times W_{y_k}^-$ is 
 $i_{x, y_1\cdots, y_k}.$

2) $\hat W_x^-$ is homeomorphic to the compact disc of dimension $\text{index}(x).$
\end{theorem}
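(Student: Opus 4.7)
My plan is to address the two items in order. Item~1 I would prove by first describing $\hat W_x^-$ as a set (the disjoint union of products of trajectory spaces with unstable manifolds anticipated by 1(a)), then topologizing it via convergence to broken trajectories, and finally producing a manifold-with-corners atlas via Morse normal forms combined with Morse--Smale transversality. Item~2 I would prove by combining the descending flow with an induction on the Morse index $k = \ind(x)$ to realize $\hat W_x^-$ as a contractible compact manifold with spherical boundary, and then invoking the appropriate Poincar\'e-type recognition theorem.

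\textbf{Item 1: compactification.} Set-theoretically, $\hat W_x^-$ is defined exactly as in the formula 1(a), taking the union over all chains $(x, y_1, \ldots, y_k)$ of strictly decreasing Morse indices (with $k=0$ giving $W_x^-$ itself). The topology is generated by declaring a sequence of trajectories in $W_x^-$ to converge to a broken trajectory $(\gamma_1, \ldots, \gamma_k, y)$ if it spends arbitrarily long time in arbitrarily small Morse neighborhoods of $y_1, \ldots, y_k$, in that order, and then exits toward~$y$. Continuity of $\hat i_x$, defined piecewise by $i_{x, y_1, \ldots, y_k}$, is then immediate. The manifold-with-corners atlas near a boundary point is supplied by the standard Morse--Smale gluing lemma: in Morse normal form near each $y_i$, transversality of $W_{y_{i-1}}^-$ with $W_{y_i}^+$ produces a local \emph{exit-time} coordinate $t_i \to \infty$ which, rescaled as $s_i = e^{-t_i} \in [0, \epsilon)$, extends smoothly to the corner $\{s_i = 0\}$. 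Taking products of these coordinates with those of the $\mathcal I$-factors and of $W_{y_k}^-$ yields a chart $[0, \epsilon)^k \times \mathbb R^{n-k}$, and the boundary description 1(a)--(b) is read off directly. This gluing lemma is well known (Cohen--Jones--Segal, Austin--Braam) and I would cite rather than reprove it.

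\textbf{Item 2 and the main obstacle.} Fix a regular value $a$ just below $f(x)$: the Morse chart at $x$ shows that $D := W_x^- \cap \{f \geq a\}$ is a smooth closed $k$-disc with boundary $S := W_x^- \cap \{f = a\} \cong S^{k-1}$, where $k = \ind(x)$. The flow of $-\grad_g f$ gives a diffeomorphism $S \times [0, \infty) \to W_x^- \setminus D$, and by the topology on $\hat W_x^-$ this extends to a continuous surjection $S \times [0, \infty] \to \hat W_x^- \setminus D$ sending $S \times \{\infty\}$ onto $\partial \hat W_x^-$. Reversing this flow retracts $\hat W_x^-$ onto $\{x\}$, so $\hat W_x^-$ is contractible. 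An induction on $k$ along the cell structure whose top strata are the products $\mathcal I(x, y) \times \hat W_y^-$ (each of dimension $\ind(x) - \ind(y) - 1 + \ind(y) = k-1$ when $y$ runs over the immediate predecessors) then shows that $\partial \hat W_x^-$ is a closed topological $(k-1)$-manifold with the homology of $S^{k-1}$. The main obstacle is the final recognition step: identifying a compact contractible topological manifold whose boundary is a sphere with the standard disc. This is classical for $k \leq 2$, follows from the topological $h$-cobordism theorem for $k \geq 5$, from Freedman for $k = 4$, and in the case $k = 3$ genuinely requires Perelman's proof of the three-dimensional Poincar\'e conjecture, as flagged in the footnote.
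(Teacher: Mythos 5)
The paper offers no argument of its own for Theorem \ref{T4.1}: it is quoted with a citation to \cite{BFK} and \cite{BH1}. Your treatment of item 1 follows the same broad strategy as those sources (set-theoretic description by broken trajectories, convergence topology, corner charts from gluing/exit-time coordinates), so at the level of a sketch it is consistent with the paper's route. Be aware, though, that the entire difficulty of item 1 sits in the smooth manifold-with-corners structure, and the sources you propose to cite for the gluing lemma (Cohen--Jones--Segal, Austin--Braam) are precisely the ones whose corner-smoothness arguments are generally regarded as incomplete; the references the paper relies on were written in part to supply complete proofs of exactly this point. Citing them instead would make your item 1 match the paper's intent.

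Item 2 contains a genuine gap. Your induction over the stratification only yields that $\hat W_x^-$ is contractible and that $\partial \hat W_x^-$ is a closed $(k-1)$-manifold with the homology of $S^{k-1}$; from this alone one cannot conclude that $\hat W_x^-$ is a disc. Compact contractible manifolds whose boundaries are homology spheres but not spheres exist in every dimension $\geq 4$ (Mazur's contractible $4$-manifold bounded by a non-simply-connected homology $3$-sphere, and Kervaire/Newman-type examples in higher dimensions), and both the topological $h$-cobordism theorem and Freedman's theorem require the boundary to be simply connected -- a fact your homological argument never establishes, and which is exactly what fails in those examples. The missing input, and the one the paper's footnote implicitly uses by phrasing the recognition statement as ``compact manifold with corners whose \emph{interior is diffeomorphic to Euclidean space}'', is that the interior of $\hat W_x^-$ is $W_x^- \cong \mathbb R^k$: for $k\geq 3$ the simple connectivity at infinity of $\mathbb R^k$ forces $\pi_1(\partial \hat W_x^-)=1$, so the boundary is a genuine homotopy sphere and the disc recognition (classical for $k\leq 2$, $h$-cobordism for large $k$, Freedman in dimension $4$, Perelman where the footnote flags it) goes through. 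You should also repair the contractibility argument: the reversed gradient flow does not retract $\hat W_x^-$ onto $\{x\}$, since on a boundary stratum $\mathcal T(x,y_1)\times\cdots\times W^-_{y_k}$ the backward flow limits to the intermediate critical point rather than to $x$; contractibility is better deduced from the collar of the corner structure, which makes $\hat W_x^-$ homotopy equivalent to its interior $\mathbb R^k$.
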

A proof of this result is contained in  \cite{BFK}
see also  \cite {BH1}.  As a consequence one has 
\begin{proposition}\label {P4.2}\

If $M$ is a closed manifold then the following holds true:
\begin{enumerate}
\item   for any $x, y$ critical points with $\ind (x)- \ind (y)=1$ the set $\mathcal T(x,y)$ is finite,
\item  the canonical embedding $\iota_x: W^-_x\to M$ extends to a smooth map $\hat \iota_x: \hat W^-_x\to M$ where  $\hat W^-_x$ is a compact smooth manifold with corners (see Theorem \ref{T4.1} above)  which makes $M= \sqcup_{x\in Cr(f)} W^-_x$ a  CW complex with open cells $W^-_x$, $x\in Cr(f),$
\item 
if for any $x\in Cr(f)$ one chooses an orientation $O_x$ of $W^-_x$  then to  any $\gamma \in \mathcal T(x,y),$ with $\ind x-\ind y=1,$ a sign $\epsilon(\gamma) =\pm 1$ can by 
defined \footnote{ (by comparing the orientation  $Q_x$  to the orientation defined  by the tangent to $\gamma$ plus the orientation $O_y$} as explained in \cite {BFK}.
The incidence  $I(x,y)$ of the cell $W^-_x$ and $W^-_y $ 
is exactly $\sum_{\gamma\in \mathcal T(x,y)}  \epsilon(\gamma).$ 
\end{enumerate}
\end{proposition}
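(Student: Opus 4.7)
All three items follow from the compactification provided by Theorem \ref{T4.1}. The plan is to obtain item (1) from a dimension count together with compactness, item (2) by recognizing the maps $\hat\iota_x$ as CW characteristic maps, and item (3) by a local degree computation for the attaching maps.

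\textbf{Item (1).} Morse--Smale transversality gives $\dim\mathcal T(x,y)=\ind(x)-\ind(y)-1=0$, so $\mathcal T(x,y)$ is a discrete $0$-manifold. The natural Morse-theoretic compactification of $\mathcal T(x,y)$ adjoins broken configurations $x\to z_1\to\cdots\to z_k\to y$ through critical points of strictly decreasing Morse index; since no integer lies strictly between $\ind(y)$ and $\ind(y)+1$, no such configurations exist and the compactification reduces to $\mathcal T(x,y)$ itself. Compactness of this compactification is a consequence of the compactness of $\hat W^-_x$ from Theorem \ref{T4.1}, so $\mathcal T(x,y)$ is both compact and discrete, hence finite.

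\textbf{Items (2) and (3).} For (2), I would first note that since $-\grad_g f$ is Morse--Smale on the compact manifold $M$, every backward orbit converges to a unique critical point, so $M=\bigsqcup_{x\in Cr(f)}W^-_x$ as a set. Theorem \ref{T4.1} then supplies, for each $x$, a compact cell $\hat W^-_x$ of dimension $\ind(x)$ homeomorphic to a closed disc, together with a smooth extension $\hat\iota_x\colon \hat W^-_x\to M$ of the inclusion; part (1)(a) of that theorem places $\hat\iota_x(\partial\hat W^-_x)$ inside $\bigsqcup_{\ind(y)<\ind(x)}W^-_y$, so the $\hat\iota_x$ are candidate characteristic maps. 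Completing (2) reduces to checking that $\hat\iota_x|_{W^-_x}$ is a homeomorphism onto its image (immediate from uniqueness of backward trajectories) and that the weak topology induced by these maps coincides with the topology on $M$ --- the latter holds because $Cr(f)$ is finite and $M$ is compact Hausdorff. For (3), fix orientations $O_x$ on the $W^-_x$; along any $\gamma\in\mathcal T(x,y)$ with $\ind(x)-\ind(y)=1$, transversality yields a smooth splitting $T W^-_x|_\gamma\cong\mathbb R\cdot(-\grad_g f)\oplus T W^-_y|_\gamma$, and $\epsilon(\gamma)=\pm 1$ records whether $O_x$ agrees with the orientation obtained by concatenating the flow direction with the parallel transport of $O_y$. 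Identifying $I(x,y)=\sum_\gamma\epsilon(\gamma)$ reduces to a local degree computation for the composition $\partial\hat W^-_x\to M^{(\ind(x)-1)}\to M^{(\ind(x)-1)}/M^{(\ind(x)-2)}$ near each trajectory $\gamma$, using finiteness of $\mathcal T(x,y)$ from item (1) to sum the local contributions.

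\textbf{Main obstacle.} The delicate step is the CW-topology verification in item (2): even granted that $\hat\iota_x$ is a smooth extension restricting to a diffeomorphism on $W^-_x$, one must control how the corner strata of distinct $\hat W^-_x$'s glue along shared broken trajectories in order to conclude that the candidate characteristic maps induce the intrinsic topology on $M$. This is exactly what the explicit $k$-boundary stratification in Theorem \ref{T4.1}(1)(a) is designed to handle, so the plan is to unpack that stratification and verify the CW axioms stratum by stratum. For item (3), the remaining care is in matching sign conventions between the boundary orientation of $\hat W^-_x$ as a manifold with corners and the concatenation convention used to define $\epsilon(\gamma)$; otherwise $\sum_\gamma\epsilon(\gamma)$ recovers $I(x,y)$ only up to a global sign.
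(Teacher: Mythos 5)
Your proposal is correct and takes essentially the same route as the paper: Proposition \ref{P4.2} is stated there purely as a consequence of the compactification Theorem \ref{T4.1} (with details deferred to \cite{BFK}), and your three steps --- compactness of the broken-trajectory compactification plus $\dim \mathcal T(x,y)=0$ and the absence of intermediate indices, the maps $\hat \iota_x$ as characteristic maps with boundary strata landing in lower-index cells (weak topology being automatic for a finite decomposition of a compact Hausdorff space), and the local degree/orientation comparison along each $\gamma$ giving $I(x,y)=\sum_\gamma \epsilon(\gamma)$ --- are exactly the standard way those deferred details are filled in. The only points you flag as delicate (gluing of corner strata, matching the sign convention) are at the level of detail the paper itself leaves to \cite{BFK}, so there is no genuine gap relative to the paper's own treatment.
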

Note that changing $f$ into $-f$ one obtains an other CW structure of $M$ whose cels are the {\it stable sets} of the critical points $x$ of $f,$ i.e. of dimension $n- \ind x$ with 
$I^{(-f)}(y,x)=(-)^{q(n-1-q)}I^f(x,y),$  $q= \ind (x).$

\vskip .1in

 {\bf The geometric complex associated with $-\grad_g f$ and integration maps.}
\vskip .1in
Define  $C^k:=Maps (Cr_k (f),\mathbb R)$  with a base provided by the characteristic functions $E_x,$ $x\in Cr_k(f),$ \footnote{ given by   $E_x(z)= \delta_{x,z} , x, z\in Cr_k(f),$} 
and the linear maps 
$\partial ^k: C^k\to C^{k+1}$  defined  by  $$\partial ^k (E_x) = \sum_{x\in Cr_k(f)} I(x,y) E_y,$$  $x\in Cr_{k+1}(f)$, $y\in Cr_k(f).$    

In view of Proposition  \ref{P4.2} one has 
$\partial ^{k+1}\cdot \partial ^k =0.$   
In view of Theorem  \ref{T4.1}  for $\omega\in \Omega^r(M)$ and $\ind x=r$ one has $\int_{W^-_x} \iota^\ast  (\omega)  <\infty$  and then  
one obtains the linear maps $Int^r: \Omega^r(M) \to C^r.$ 

In view of Stokes theorem  and Theorem \ref{T4.1} one has
$\partial ^k\cdot Int ^{k}= Int^{k+1}\cdot d ^k$  which  makes  $Int ^\ast : (\Omega^\ast (M), d^\ast)\to (C^\ast,\partial ^\ast)$ a morphism of cochain complexes which by 
de-Rham theorem is a quasi-isomorphism. 
 In the diagram below all arrows are quasi-isomorphisms (i.e. induce isomorphisms in cohomology).
$$\xymatrix{ (\Omega^\ast_{vs}(M)(t), d^\ast (t))\ar[r]^{\subset} \ar@/^2pc/[rrr]|{Int^r_{vs}(t)}    &(\Omega^\ast (M), d^\ast (t))\ar[r]^{\cdot e^{tf}}  \ar@/_2pc/[rr]|{Int^r(t)}   & (\Omega^\ast (M), d^\ast )\ar[r]^{Int^\ast}& (C^\ast, \partial^\ast)}$$

For  future needs  one considers the scaling  $\mathcal S^\ast (t)$ of the geometric complex, precisely  the isomorphism  $\mathcal S^\ast (t): (C^\ast, \partial ^\ast)\to (C^\ast, \partial ^\ast(t))$ with 
$\mathcal S^q(t): C^q= Maps(Cr_q(f),\mathbb R) \to C^q=Maps(Cr_q(f),\mathbb R)$ defined by 
\begin{equation}\label {E4.1}
\mathcal S^q(t)(E_x):= (\pi/t)^{(n-2q)/4} e^{-tq} E_x
\end{equation} 
 where $E_x $ is the characteristic function of $x\in Cr_q(f)$ and $\partial^q (t)= \mathcal S^{q+1}(t) \cdot \partial ^q \cdot (\mathcal S^q(t)^{-1}.$ Clearly $(C^\ast, \partial^\ast (t))$ is an analytic family  of cochain complexes  in $t\in \mathbb R$.

\section{Sketch of the proof of Theorems \ref{T3.2} and  \ref{T3.3}} 
\label {S5}
\vskip .1in
The proof of Theorems \ref{T3.2}  and \ref{T3.3} is based on the "mathematics of harmonic oscillator",  in our  case of {\it multivariable harmonic oscillators}  cf. Propositions \ref{P5.1} and \ref{P5.2} and  on a 
criterion for detecting a {\it gap in the spectrum}  of a nonnegative self adjoint operator in a Hilbert space
$H$ cf. Lemma \ref{L5.3} below. 
\vskip .1 in

Recall from \cite{JG} that 
the {\it classical harmonic oscillator}, is the operator 
\begin{equation}\label {E5.1}
-\frac{d^2}{ dx^2} + x^2 : L^2(R)\to L^2(R)
\end{equation}
which when considered as an unbounded  self adjoint operator with domain $\mathcal S(R),$ the space of rapidly decaying function,  has the pure spectrum with eigenvalues $\{ 2j+1 , j\in \mathbb Z_{\geq 0}\}$ and corresponding eigenfunction 
$H_j(x) e^{-x^2}$ with $H_j(x)$ the $j-$th  Hermite polynomial.  

This permits to derive the eigenvalues and the eigenfunctions for 
\begin{equation}\label {E5.2}
-\frac {d^2}{dx^2} + a + b x^2 : L^2(R)\to L^2(R).
\end{equation}

We call  the operators of the form 
\begin{equation}\label {E5.3}
\Delta^q  + A + b\sum_i x_i^2
\end{equation} 
on  
$ \Omega^q (\mathbb R^n) = \{ \omega=\sum _I a_I(x_1, x_2, \cdots x_n) dx_I\}$ {\it multivariable  harmonic oscillator}.
Here   
$I= (j_1, j_2, \cdots j_q),$ $1\leq j_1< j_2, \cdots < j_q\leq n,$ $dx_I= dx_{j_1} \wedge \cdots \wedge dx_{j_q}$
with 
\begin{equation*}
\Delta^q \omega= -\sum _I \sum_{1\leq i\leq n} \frac{\partial ^2}{\partial x_i^2} a_I (x_1, \cdots x_n) dx_I ,\ \ 
A\omega= \sum_I \epsilon_I a_I (x_1, \cdots x_n) dx_I, \  \epsilon _I\in \mathbb R,  \ \ 
b\in \mathbb R_{>0}.
\end{equation*}
As an unbounded self adjoint operator on $L^2(\mathcal S^q(R^n))$ with domain $\mathcal S^q(R^n)= \{\omega\in \Omega^q(R^n)\mid  a_I(x_1, x_2, \cdots x_n)\in \mathcal S(\mathbb R^n)\}$  the operator (\ref {E5.3}) is globally elliptic  in the sense of Shubin \cite {Sh2}, and can be decomposed  as tensor product of operators of the form (\ref{E5.2})
hence its eigenvalues and eigenforms can be calculated explicitly.

In this paper we consider the operator $\Delta^{q,k}(t)$ on $L_2(\mathcal S^q(R^n))$  of the form (\ref{E5.3})
with 
$\epsilon_I ^{q,k}= (-n +2k -2q + 4\sharp \{j\in I \mid k+1\leq j \leq q\}) t $, \ 
$b=t^2$ and notice that this is exactly the Witten Laplacian for the function  $f_k:\mathbb R^n\to \mathbb R$
\begin{equation}\label {E5.4}
f_k(x_1, \cdots x_n)= c- 1/2 \sum_{1\leq i \leq k} x^2_i + 1/2 \sum_{k+1\leq i \leq n} x^2_i.
\end{equation}

For $I= (j_1, j_2, \cdots j_q)$ with $1\leq j_1< j_2< \cdots j_q\leq n$ and $P=(p_1, p_2, \cdots p_n)$ with $p_i\in \mathbb Z_{\geq 0}$ define 
\begin{equation}\label {E5.5}
o^k(I,P)=\sum p_i + q+k -2\sharp \{j\in I\mid 1\leq j\leq k\}. 
\end{equation}
Since $\Delta^{q,k}(t)$ is of the form (\ref{E5.3}), one can calculate its eigenvalues and eigenforms using the eigenvalues and the eigenfunctions of (\ref{E5.2})
and obtain 
\begin{proposition}\label {P5.1} (cf \cite{BFK4}) \

The eigenvalues $\lambda^{q,k}_\beta(t)$ and their corresponding eigenforms $\omega^{q,k}_\beta(t)(x)$ abbreviated $\omega^{q,k}_\beta(t)$ of the globally elliptic operator $\Delta^{q,k}(t)$ are indexed by pairs $ \beta=(I,P)
\in \mathcal I^q\times (\mathbb Z_{\geq 0})^n$ with $o^k(I,P) \geq 0$ 
and are exactly 
\begin{enumerate}  
\item $\lambda^{q,k}_\beta (t)= 2t \cdot o^k(\beta)$
\item $\omega^{q,k}_\beta(t)= H_{p_1}(\sqrt t x_1)\cdots H_{p_n}(\sqrt t x_n) e^{-t |x|^2 /2} dx_I$ .

%

 \end{enumerate}
 \end{proposition}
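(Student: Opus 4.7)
The plan is to exploit the diagonal structure of $\Delta^{q,k}(t)$ with respect to the constant frame $\{dx_I\}_{I\in \mathcal I^q}$. Since on flat $\mathbb R^n$ the Laplacian $\Delta^q$ acts on $\omega = \sum_I a_I(x)\, dx_I$ coefficient-wise, the multiplication operator $A$ is by construction diagonal with entry $\epsilon_I^{q,k}$ on the $I$-slot, and the potential $t^2|x|^2$ is scalar, the eigenvalue problem for $\Delta^{q,k}(t)$ decouples into one scalar eigenvalue problem on $L^2(\mathbb R^n)$ for each $I\in \mathcal I^q$. On the $I$-slot this scalar problem is
\begin{equation*}
\bigl(-\Delta + t^2|x|^2 + \epsilon_I^{q,k}\bigr) a_I = \lambda\, a_I.
\end{equation*}

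Next I would handle the $n$-dimensional oscillator $-\Delta + t^2|x|^2 = \sum_{i=1}^n \bigl(-\partial_{x_i}^2 + t^2 x_i^2\bigr)$ by separation of variables. The rescaling $y_i = \sqrt{t}\, x_i$ reduces each one-dimensional factor to the classical harmonic oscillator recalled in the text, so its spectrum is $\{t(2p_i+1): p_i\in \mathbb Z_{\geq 0}\}$ with normalized eigenfunctions proportional to $H_{p_i}(\sqrt t\, x_i)\, e^{-t x_i^2/2}$. Taking tensor products over the $n$ factors, and using completeness of the Hermite functions in $L^2(\mathbb R)$ (equivalently essential self-adjointness of the Shubin-type operator on $\mathcal S(\mathbb R^n)$), one obtains a complete orthonormal family of eigenfunctions $\prod_i H_{p_i}(\sqrt t\, x_i)\, e^{-t|x|^2/2}$ indexed by $P=(p_1,\dots,p_n)\in \mathbb Z_{\geq 0}^n$ with eigenvalue $t(2\textstyle\sum p_i + n)$. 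Adding back the constant $\epsilon_I^{q,k}$, the full spectrum of $\Delta^{q,k}(t)$ is indexed by $\beta = (I,P)$ with
\begin{equation*}
\lambda^{q,k}_\beta(t) \;=\; t\bigl(2\textstyle\sum_i p_i + n\bigr) + \epsilon_I^{q,k},
\end{equation*}
and the eigenforms are exactly $\omega^{q,k}_\beta(t) = \prod_i H_{p_i}(\sqrt t\, x_i)\, e^{-t|x|^2/2} dx_I$ as claimed.

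It remains to verify the algebraic identity $\lambda^{q,k}_\beta(t) = 2t\cdot o^k(I,P)$. Using $|I|=q$ to rewrite the combinatorial datum appearing in $\epsilon_I^{q,k}$ in terms of $N_k(I):=\sharp\{j\in I \mid 1\le j\le k\}$, one checks that the constant terms $n$ and $-n$ cancel and the remaining coefficients collapse to $2(q + k - 2 N_k(I))$, reproducing $2\, o^k(I,P)$. The non-negativity constraint $o^k(I,P) \ge 0$ is then automatic, since $\Delta^{q,k}(t)$ is non-negative (being unitarily equivalent to the Witten Laplacian of $f_k$ on $\mathbb R^n$).

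The steps above are analytically routine: the only real obstacle is bookkeeping. Specifically, one must verify that the Witten zero-order term $t(L^q_X + \mathcal L^q_X)$ for $X = -\mathrm{grad}\, f_k$ is indeed diagonal in the frame $\{dx_I\}$ with the advertised entry $\epsilon_I^{q,k}$. This reduces to evaluating the commutators $[dx_i\wedge,\, \iota_{\partial_i}]$ on $dx_I$ (which contribute $+1$ or $-1$ according to whether $i\in I$ or not) and weighting by the Hessian of $f_k$, which has eigenvalue $-1$ in the first $k$ coordinates and $+1$ in the remaining $n-k$. Once this sign bookkeeping is settled, the remainder of the argument uses nothing beyond the classical one-dimensional harmonic oscillator, separation of variables, and completeness of the Hermite functions.
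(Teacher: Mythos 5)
Your proof is correct and takes essentially the same route as the paper, which handles Proposition \ref{P5.1} by noting that $\Delta^{q,k}(t)$, being of the form (\ref{E5.3}), splits over the frame $dx_I$ into tensor products of shifted one-dimensional oscillators of the form (\ref{E5.2}) with Hermite eigenfunctions; your $I$-slot decoupling, the rescaling $y_i=\sqrt t\,x_i$, and the completeness argument fill in exactly that outline. The extra bookkeeping you carry out (diagonality of the zero-order Witten term with entries $\epsilon^{q,k}_I$, and the identity $t(2\sum_i p_i+n)+\epsilon^{q,k}_I=2t\,o^k(I,P)$) is precisely what the paper leaves implicit, and it checks out --- indeed it tacitly corrects the apparent typo $k+1\le j\le q$ (which should read $k+1\le j\le n$) in the paper's displayed formula for $\epsilon^{q,k}_I$.
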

In view of the above one has.
\begin{proposition}\label {P5.2}\
For any $q$ integer between $0$ and $n$ and $N\in \mathbb Z_{\geq 0}$ 
\begin{enumerate}
\item the collection of pairs $(I,P)$ with $o^k(I,P)= N,$ 
is finite, 
\item  if $N=0$ then  $k=q,$ hence $\ker \Delta^q(t)= \ker\Delta^{q,q}(t),$ and there is only one pair  $(I_0, P_0)$, $I_0= (1,2,\cdots q)$ and $P_0= (0,0,\cdots, 0)$ 
s.t   $ \lambda^q(t):=  \lambda^{q,q}_{I_0, P_0}=0$ with the corresponding eigenform  
$$\omega^q(t):= \omega^{q,q}_{I_0, P_0}(t)= 
(t/{\pi})^{n/4} e^{-t\sum_i x_i^2/2} dx_1\wedge\cdots
 \wedge dx_q.$$
 \end{enumerate}
\end{proposition}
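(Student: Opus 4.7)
The plan is to reduce both assertions to elementary arithmetic manipulations of the explicit formula
\[
o^k(I,P)=\sum_{i=1}^n p_i + q + k - 2N_k(I),\qquad N_k(I)=\sharp\{j\in I\mid 1\le j\le k\},
\]
combined with the obvious bounds $0\le N_k(I)\le \min(q,k)$ and $p_i\ge 0$. Once the combinatorics is pinned down, the eigenform formula in (2) is read off directly from Proposition \ref{P5.1} after normalizing in $L^2$.

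For part (1), I would fix $q$ (and $k$) and rewrite the condition $o^k(I,P)=N$ as
\[
\sum_{i=1}^n p_i = N - q - k + 2N_k(I).
\]
Since $I\in\mathcal I^q$ is a $q$-element subset of $\{1,\dots,n\}$ there are at most $\binom{n}{q}$ choices of $I$, so it suffices to bound the number of $P\in(\mathbb Z_{\ge 0})^n$ for each $I$. But $N_k(I)\le\min(q,k)$ forces $\sum p_i\le N-q-k+2\min(q,k)=N-|q-k|$, and the number of $P$ with $\sum p_i$ at most a fixed nonnegative integer is finite. If one prefers to let $k$ also range over $\{0,1,\dots,n\}$, the same bound together with the finiteness of that index set gives the conclusion.

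For part (2), setting $N=0$ rearranges to $\sum p_i + q + k = 2N_k(I)$. The right-hand side is at most $2\min(q,k)$, which is $\le q+k$ with equality iff $q=k$; hence $q=k$, $\sum p_i=0$ (so $P=P_0=(0,\dots,0)$), and $N_q(I)=q$, which forces $I=I_0=(1,2,\dots,q)$. In particular, $\ker\Delta^q(t)=\ker\Delta^{q,q}(t)$ is spanned by a single eigenform. Plugging $(I_0,P_0)$ into Proposition \ref{P5.1}(2), and using $H_0\equiv 1$, gives the form $c(t)\,e^{-t|x|^2/2}\,dx_1\wedge\cdots\wedge dx_q$ for some $c(t)>0$. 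The normalization $c(t)=(t/\pi)^{n/4}$ then follows from the Gaussian integral
\[
\int_{\mathbb R^n}e^{-t|x|^2}\,dx=(\pi/t)^{n/2}.
\]

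I do not anticipate a real obstacle: everything is bookkeeping once the sign pattern of $o^k(I,P)$ is made explicit. The only point requiring any vigilance is that the bound $N_k(I)\le\min(q,k)$ is used twice — as a coarse bound in (1) and as a sharp equality forcing $k=q$ in (2) — and that the Hermite polynomial $H_0$ must be normalized so that the Gaussian computation yields precisely the stated prefactor $(t/\pi)^{n/4}$.
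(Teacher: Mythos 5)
Your proof is correct and takes essentially the same route the paper intends: Proposition \ref{P5.2} is presented there as an immediate consequence of Proposition \ref{P5.1} and the definition of $o^k(I,P)$, and your bookkeeping (the bound $N_k(I)\le \min(q,k)$ giving finiteness for fixed $N$, and forcing $k=q$, $P=(0,\dots,0)$, $I=(1,\dots,q)$ when $N=0$) together with the Gaussian normalization yielding the prefactor $(t/\pi)^{n/4}$ supplies exactly the routine details the paper leaves implicit. No gap to report.
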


\vskip .1in \hskip 5in $\blacksquare$ 

The main criterion to recognize a gap in the spectrum  is the following lemma whose proof can be found in  \cite {BFK3} Lemma 1.2. 
 \begin {lemma} \label {L5.3}
 Let $A: H \to H$ be a densely defined (not
 necessary bounded )
 self adjoint nonnegative  operator in a Hilbert space $(H,<,>)$
and  $a,b$
two real numbers so that $0<a <b < \infty.$  Suppose that there exist
two closed subspaces $H_1$ and $H_2$ of $H$ with $H_1\cap H_2 =0$
and $H_1 + H_2 =H$ such that
 \newline (1) $<Ax_1, x_1> \ \leq \ a||x_1||^2$ for any $x_1\in H_1,$
 \newline (2) $<Ax_2, x_2> \ \geq \ b||x_2||^2$ for any $x_2\in H_2.$

 Then $spect A\bigcap (a,b)= \emptyset.$
 \end{lemma}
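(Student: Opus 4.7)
\textbf{The plan} is to prove that every real $c\in(a,b)$ lies in the resolvent set of $A$; letting $c$ range over $(a,b)$ then gives $spect\, A\cap(a,b)=\emptyset$. Since $A$ is self-adjoint, membership in the resolvent set reduces to the coercive estimate $\|(A-c)x\|\ge\delta\|x\|$ for all $x\in\Dom(A)$ and some $\delta=\delta(c)>0$: the estimate yields $\Ker(A-c)=0$ and makes the range closed, and self-adjointness promotes closed range to $\mathrm{range}(A-c)=H$ via the identity $\mathrm{range}(A-c)^{\perp}=\Ker(A-c)$.

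\textbf{Key construction.} Given $x\in\Dom(A)$, decompose $x=x_1+x_2$ along the topological direct sum $H=H_1\oplus H_2$. Because $H_1,H_2$ are closed and complementary, the skew projections $\pi_i:H\to H_i$ are bounded by the open mapping theorem, so one has both $\|x_i\|\le K\|x\|$ and the reverse lower bound $\|x_1\|^2+\|x_2\|^2\ge K'\|x\|^2$, with constants $K,K'>0$ depending only on the pair $(H_1,H_2)$. The decisive step is to test $(A-c)x$ against $z:=x_2-x_1$; the sign flip is precisely what couples the \emph{upper} bound in hypothesis (1) with the \emph{lower} bound in (2). A direct expansion, together with self-adjointness of $A-cI$ (which makes the cross term
$\langle(A-c)x_1,x_2\rangle-\langle(A-c)x_2,x_1\rangle=2i\,\Imm\langle(A-c)x_1,x_2\rangle$
vanish upon taking real parts), gives
\[
\re\langle (A-c)x,\,x_2-x_1\rangle \;=\; \bigl(c\|x_1\|^2-\langle Ax_1,x_1\rangle\bigr)+\bigl(\langle Ax_2,x_2\rangle-c\|x_2\|^2\bigr)\;\ge\;(c-a)\|x_1\|^2+(b-c)\|x_2\|^2.
\]
Choosing $c=(a+b)/2$ makes both coefficients equal to $(b-a)/2$; combined with $\|x_1\|^2+\|x_2\|^2\ge K'\|x\|^2$ and $\|z\|\le 2K\|x\|$, a single application of Cauchy--Schwarz converts this into $\|(A-c)x\|\ge\tfrac{(b-a)K'}{4K}\,\|x\|$, which is the required coercive bound.

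\textbf{Main obstacle.} The only subtle point is that the identities above treat $\langle Ax_i,x_i\rangle$ as well-defined, whereas $x=x_1+x_2\in\Dom(A)$ does not in general force $x_i\in\Dom(A)$ individually. The natural remedy, already implicitly required for hypotheses (1) and (2) to make sense, is to read everything through the closed quadratic form $q_A(u)=\|A^{1/2}u\|^2$ on the form domain $\Dom(A^{1/2})$. Under the standing assumption $H_1,H_2\subset\Dom(A^{1/2})$, the skew decomposition preserves the form domain (if $x\in\Dom(A^{1/2})$ and $x_1\in H_1\subset\Dom(A^{1/2})$, then $x_2=x-x_1$ lies there too), and the cross-term manipulations become standard polarization identities for $q_A$. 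One then first establishes the coercive estimate on the form-core $\Dom(A)$ as above and extends it to $\Dom(A^{1/2})$ by density, completing the proof.
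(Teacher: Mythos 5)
The paper itself gives no proof of Lemma \ref{L5.3}; it refers to \cite{BFK3}, Lemma 1.2. Your route --- a coercivity estimate $\|(A-c)x\|\geq \delta\|x\|$ on $\Dom(A)$ obtained by pairing $(A-c)x$ with $x_2-x_1$, plus self-adjointness to upgrade injectivity-with-closed-range to $c$ being in the resolvent set --- is the standard argument for this gap criterion, and its core is correct: the skew projections are bounded because $H_1,H_2$ are closed with $H_1\cap H_2=0$, $H_1+H_2=H$, and the cross terms disappear upon taking real parts, giving $\re\langle (A-c)x,\,x_2-x_1\rangle\geq (c-a)\|x_1\|^2+(b-c)\|x_2\|^2$.

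Two points need repair. First, you announce that every $c\in(a,b)$ lies in the resolvent set but then fix $c=(a+b)/2$; as written you only exclude the midpoint from the spectrum. The fix is contained in your own display: for arbitrary $c\in(a,b)$ bound the right-hand side below by $\min(c-a,\,b-c)\,\bigl(\|x_1\|^2+\|x_2\|^2\bigr)$, which yields the coercive estimate with a constant depending on $c$ and hence empties the whole interval. Second, your standing assumption $H_1,H_2\subset\Dom(A^{1/2})$ is too strong: since $H_1+H_2=H$ it gives $\Dom(A^{1/2})=H$, and $A^{1/2}$ being closed, the closed graph theorem would force $A^{1/2}$, hence $A$, to be bounded --- ruling out exactly the Witten Laplacians to which the lemma is applied (there $H_2=H_1^{\perp}$ is certainly not inside the form domain). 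What your argument actually uses, and what should be assumed, is only $H_1\subset\Dom(A^{1/2})$ (automatic in the application, where $H_1$ is finite dimensional and spanned by smooth forms), with (1) and (2) read in the quadratic-form sense on $H_i\cap\Dom(A^{1/2})$: then for $x\in\Dom(A)$ one has $x_1\in H_1\subset\Dom(A^{1/2})$, hence $x_2=x-x_1\in\Dom(A^{1/2})$, exactly as you observe, and the computation goes through with $\langle Ax,w\rangle=\langle A^{1/2}x,A^{1/2}w\rangle$. Finally, the closing step ``extend the coercive estimate to $\Dom(A^{1/2})$ by density'' is superfluous and not meaningful as stated (since $(A-c)x$ is undefined there); the estimate on $\Dom(A)$ already gives $\Ker(A-c)=0$, closed range, and dense range, so $c\notin\Spec(A)$ and you are done.
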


 Lemma \ref{L5.3} will be applied to $H = L_2(\Omega^q(M)),$  
$A=\Delta^q(t)$ for the proof of Theorem \ref{T3.2} and $A=1/t \cdot \Delta^q(t), t>0$ for  the proof of Theorem \ref{T3.3} with $H_1$ described  explicitly 
and $H_2:= H_1^\perp.$ 

For 
Theorem \ref{T3.2} the subspace $H_1$ will be of dimension $c_q= \sharp Cr_q(f).$ 
 To construct $H_1$ we first construct for any $\eta >0$  the form  $ \tilde \omega^{q,\eta}(t)\in \Omega^q_c(\mathbb R^n)$ 
  with support in the ball
$\{|x|\leq \eta\},$ $|x|= \sqrt{\sum_i x_i^2},$ which agrees with $\omega^q(t)$ given above on the ball $\{|x|\leq \eta/2\}$ 
and satisfies
$< \tilde\omega^{q,\eta}(t), \tilde\omega^{q,\eta}(t)> =1$\footnote{The scalar product $\langle\  , \ \rangle$ 
 on $\mathcal S^q(\Bbb R^n)$ is induced by the Euclidean metric on $\mathbb R^n.$} 
 \ and place this form as a form $\overline \omega^q_y(t)$ on $M$ in a neighborhood of each critical point $y\in Cr_k(f).$

Choose a smooth  function $\gamma_\eta(u),$
$\eta\in (0,\infty), \ u\in \Bbb R$
s.t.  

\begin{equation} \label {E5}\gamma_\eta(u)=\quad
\left\{\aligned 1\ \text {if} \ & u\leq \eta/2 \\ 0 \ \text{if}\  &u
>\eta\endaligned \right\}.
\end{equation} and define 
first $\tilde \gamma_\eta : \mathbb R^n\to \mathbb R$ by $\tilde \gamma_\eta(x) = \gamma_{\eta}(|x|)$
and then 
$\tilde \omega^{q,\eta}(t) \in \Omega^q_c(\Bbb R^n) $  by
$$\tilde\omega^{q,\eta}(t) = (1/ \beta_q(t)) \cdot \tilde \gamma_\eta \cdot
\omega^{q}(t)$$ with  
\begin{equation}\label {E6} \beta_q(t)= (t/{\pi})^{n/4}( \int_{\Bbb R^n}
\gamma_\eta^2(|x|) e^{-t |x|^2} dx_1\cdots dx_n)^{1/2}.
\end{equation}

The following proposition can be
obtained
by elementary calculations in coordinates in view of the explicit
formula of
$\Delta^{q,k}(t),$ its eigenforms and of Propositions \ref{P5.1} and \ref{P5.2}(cf. \cite {BFKM}, 
Appendix 2)\footnote {A conceptual derivation of this proposition can be also obtained in view of  Propositions \ref{P5.1} and \ref{P5.2}}.

\begin {proposition}\label{P5.4} 
For 
a fixed $r\in \Bbb Z_{\geq 0}$ 
there
exist positive constants $C,C',C'', T_0,$ and $\epsilon_0$ so that $t >T_0$ and
$\epsilon <\epsilon_0$ imply

(1) $ |\frac{\partial^{|\alpha|}}{\partial x_1^{\alpha_1}\cdots
\partial x_n^{\alpha_n}}\Delta^{q,q}(t)
\tilde\omega^{q,\epsilon}(t)(x)|\leq Ce^{-C't}$ for any
$x\in \Bbb R^n$ and multiindex $\alpha= (\alpha_1,\cdots,\alpha_n)$
with
$|\alpha|=\alpha_1+\cdots+ \alpha_n \leq  r$

(2) $<\Delta^{q,k}(t)\tilde\omega^{q,\epsilon}(t)(x),
\tilde \omega^{q,\epsilon}(t)(x)>\  \geq\  2t|q-k|$

(3) If $\omega \perp \tilde\omega^{q,\epsilon}(t)$  with respect to
the scalar
product $<.,.>$
then $$<\Delta^{q,q}\omega,\omega>\  \geq \ C'' t||\omega||^2.$$
\end{proposition}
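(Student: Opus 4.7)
The plan is to establish each of the three inequalities by exploiting the explicit description of the eigenpairs of $\Delta^{q,k}(t)$ supplied by Propositions \ref{P5.1} and \ref{P5.2}. Throughout I would use the facts that $\omega^q(t)$ is a unit $L^2$-form spanning the one-dimensional kernel of $\Delta^{q,q}(t)$, that the next eigenvalue of $\Delta^{q,q}(t)$ is $2t$, and that $\beta_q(t)\to 1$ as $t\to\infty$, all direct consequences of Propositions \ref{P5.1}--\ref{P5.2} and a short Gaussian integral computation.

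For item (1) I would rewrite
\[
\Delta^{q,q}(t)\tilde\omega^{q,\epsilon}(t) \;=\; \frac{1}{\beta_q(t)}\,[\Delta^{q,q}(t),\tilde\gamma_\epsilon]\,\omega^q(t),
\]
using $\omega^q(t)\in \ker\Delta^{q,q}(t)$. Only the second-order piece $-\sum_i \partial^2_{x_i}$ fails to commute with multiplication by $\tilde\gamma_\epsilon$, so by Leibniz the error is a sum of terms containing $\partial_i\tilde\gamma_\epsilon$ or $\partial_i^2\tilde\gamma_\epsilon$, supported on the annulus $\epsilon/2\le|x|\le\epsilon$. There the Gaussian $e^{-t|x|^2/2}$ is uniformly bounded by $e^{-t\epsilon^2/8}$, and applying any further derivatives of order at most $r$ introduces only polynomial prefactors, yielding a pointwise bound of the form $Ce^{-C't}$ as required.

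For item (2) the observation is that $\tilde\omega^{q,\epsilon}(t)$ has components only along $dx_{I_0}$ with $I_0=(1,\dots,q)$, so $\Delta^{q,k}(t)$ acts on it as the scalar harmonic oscillator $-\Delta + \epsilon_{I_0}^{q,k} + t^2|x|^2$ on $L^2(\mathbb R^n)$, whose infimum is $nt + \epsilon_{I_0}^{q,k}$. A direct count gives $\#\{j\in I_0 : k+1\le j\le q\} = \max(q-k,0)$ and hence $nt + \epsilon_{I_0}^{q,k} = 2|q-k|t$. Since $\|\tilde\omega^{q,\epsilon}(t)\|=1$, the inequality is immediate, with no residual cut-off error to absorb.

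For item (3) I would decompose any $\omega\perp\tilde\omega^{q,\epsilon}(t)$ as $\omega = c\,\omega^q(t) + \omega_\perp$ with $\omega_\perp\perp\omega^q(t)$. The exponential closeness $\|\tilde\omega^{q,\epsilon}(t) - \omega^q(t)\| = O(e^{-ct})$ (the same Gaussian tail estimate used in item (1), together with $\beta_q(t)\to 1$) combined with the orthogonality relation forces $|c| = O(e^{-ct})\|\omega_\perp\|$. Then $\langle\Delta^{q,q}(t)\omega,\omega\rangle = \langle\Delta^{q,q}(t)\omega_\perp,\omega_\perp\rangle \ge 2t\,\|\omega_\perp\|^2$ by the spectral gap, while $\|\omega_\perp\|^2 \ge \|\omega\|^2(1-O(e^{-ct}))$, which gives the stated bound with any constant $C''<2$ once $t$ is large enough. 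The main obstacle is precisely this third item: the bookkeeping for the exponentially small contamination from the true kernel direction $\omega^q(t)$ under the substitution $\omega^q(t)\mapsto\tilde\omega^{q,\epsilon}(t)$ must be done carefully. Items (1) and (2) are essentially direct computations once the explicit harmonic-oscillator eigendata from Propositions \ref{P5.1} and \ref{P5.2} are invoked.
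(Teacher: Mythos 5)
Your sketch is correct, and it is essentially the ``conceptual derivation'' that the paper only alludes to in a footnote: the text itself gives no proof, deferring instead to elementary calculations in coordinates in \cite{BFKM}, Appendix 2. Your route goes through the spectral data of Propositions \ref{P5.1} and \ref{P5.2}: for (1) the identity $\Delta^{q,q}(t)\tilde\omega^{q,\epsilon}(t)=\beta_q(t)^{-1}[\Delta^{q,q}(t),\tilde\gamma_\epsilon]\,\omega^{q}(t)$ is valid (only the flat Laplacian part fails to commute with the cut-off), the commutator terms live on the annulus $\epsilon/2\le |x|\le\epsilon$ where the Gaussian is at most $e^{-t\epsilon^2/8}$, and the polynomial-in-$t$ prefactors coming from $(t/\pi)^{n/4}$, from differentiating the Gaussian, and from the at most $r$ additional derivatives are absorbed, giving the $Ce^{-C't}$ bound; for (2) your count is right ($\sharp\{j\in I_0\mid k+1\le j\le q\}=\max(q-k,0)$, so the zeroth-order constant plus the ground-state energy $nt$ equals $2|q-k|t$), and since $\Delta^{q,k}(t)$ preserves the $dx_{I_0}$-component the variational principle gives the bound with no cut-off error, as you observe; for (3) the one-dimensional kernel, the gap $2t$ to the next eigenvalue, and $\|\tilde\omega^{q,\epsilon}(t)-\omega^{q}(t)\|=O(e^{-ct})$ give $\langle\Delta^{q,q}(t)\omega,\omega\rangle\ge 2t(1-O(e^{-ct}))\|\omega\|^2$, hence any $C''<2$ works for $t$ large (here $\Delta^{q,q}$ in the statement should indeed be read as $\Delta^{q,q}(t)$, and the pairing in (2) as the $L^2$ scalar product, as you did). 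Compared with the coordinate computation cited in \cite{BFKM}, your argument is shorter and makes transparent where the constants come from, while the computational route produces the same estimates without invoking the global spectral description; both are legitimate. One caveat, inherited from the way the proposition is phrased rather than from your proof: your constants $C$, $C'$, $T_0$ and the implied constants in $O(e^{-ct})$ depend on $\epsilon$ (the decay rate is of order $\epsilon^2$, and derivatives of $\tilde\gamma_\epsilon$ grow as $\epsilon\to 0$), so the statement should be read with $\epsilon$ fixed first, or with constants allowed to depend on $\epsilon$, exactly as in \cite{BFKM}.
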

In order to place $\tilde \omega^{q,\eta}(t)$ on $M$ one proceeds as follows.

By hypothesis, in a neighborhood $U_y$ of each  $y\in Cr_q(f),$  one chooses  Morse charts $\varphi_y: U_y\to \mathbb R^n$ s.t. 
in these coordinates 
$f$ has the form (\ref {E5.4})
and  the metric $g$ given by $g_{i,}= \delta_{ij}.$
Choose
$\epsilon >0$ small enough s.t  the inverse image of the open discs  of radius slightly larger  than $2\epsilon$  are  disjoint neighborhoods of the critical points and
 consider the smooth forms $\overline{\omega}^q_{y}(t) \in \Omega^q(M)$
defined by
\begin{equation} \label {E5.6} \overline{\omega}^q_{y}(t) |_{M\setminus \varphi_y^{-1}
(D_{2\epsilon})}:=0, \ \
\overline{\omega}^q_y(t)|_{\varphi_y^{-1}(D_{2\epsilon})}:=
\varphi_y^*(\tilde\omega^{q,\epsilon}(t)).
\end{equation}

For any given $t$ the forms $\overline{\omega}^q_y(t) \in\Omega^q(M), \ y\in Cr_q(f),$ are
orthonormal. Indeed,
if $y,z\in Cr_q(f),$ $y\ne z,$ \  $\overline{\omega}^q_y(t)$ and
$\overline{\omega}^q_z(t)$
have disjoint support, hence are orthogonal, and because the support
of $\overline{\omega}^q_y (t)$
is contained in charts as specified above satisfy 
$< \overline{\omega}^q_y(t), \overline{\omega}^q_y(t)> = 1.$ 

One considers the linear map 
$J^q(t): C^q \to \Omega^q(M)$
defined by
$J^q(t)(E_y) = \overline{\omega}^q_y,(t) .$ 
 Clearly $J_q(t)$ is an isometry, thus  injective.
\vskip .2in

{\bf Proof of Theorems \ref{T3.2}:} (sketch). Take 
$H_1:= J^q(t)(C^q)$ and
$H_2= H_1^{\perp},$
let $T_0, C, C', C''$ be given by Proposition \ref{P5.4} and define
\begin{equation}
\begin{aligned}
C_1:=&\inf _{z\in M'} || grad_g f(z)||,\\
C_2:=& \sup_{z\in M} ||(L_{ -grad_g f}(z) +\mathcal L_ {-grad_g f}(z)
||
\end{aligned}
\end{equation}
for $ M'= M\setminus \bigcup_{y\in Cr_q(\alpha)}
 \varphi_y^{-1}(D_\epsilon).$
 
Here $||grad_g f(z) ||$ resp.
$||(L_{-grad_g f}(z) +\mathcal L_ {-grad_g f}(z)||$ denote
the norm of
$ grad_g f (z)$ $\in T_z(M)$ resp. of the linear map
$(L_{-grad_g f} +\mathcal L_ {-grad_g f})(z):
 \Lambda^qT^\sharp_z(M) \to
 \Lambda^qT^\sharp_z(M)$
with respect to the scalar product  induced in $T^\sharp_z(M),$ the dual to $T_z(M).$ Note that if $X$ is a vector field then $L_X +\mathcal L_X$ is a
 differential operator of order zero,
hence an endomorphism of the bundle $\Lambda^qT^\sharp (M)\to M,$ whose space of smooth sections is $\Omega^q(M).$

We can use the constants
$T_0, C, C', C'', C_1, C_2$ to construct $C'''$ and $\epsilon_1$ so
that for $t> T_0$
and $\epsilon <\epsilon_1,$
one has
$< \Delta^q(t) \omega, \omega > \geq C_3t <\omega, \omega>$ for any
$\omega \in H_2$
 (cf. \cite{BFKM}, page 808-810).

One  applies Lemma \ref{L5.3} for
$a= Ce^{-C't}, b =C''' t$ with $t>T_0.$
This concludes the first  part of Theorem \ref{T3.2}.
and also establishes that $c_q$ is larger or equal to the number of eigenvalues $\lambda^q_\alpha(t) \in [0, C e^{-C' t})$ for $t>T_0.$

In order to check that $c_q$ is smaller or equal to this number  
 let $Q^q(t)$ denote the orthogonal projection in $H$ on the
 span of  the  eigenvectors
 corresponding to the eigenvalues smaller than $1.$ In view of the
 ellipticity of
 $\Delta^q(t)$ all
 these eigenvectors are smooth $q-$forms.  
 
 Let $\mathcal I^q(t): =Q^q(t)\cdot J^q(t).$
  By  decreasing $\epsilon $ and increasing $T_0$ 
  one can insure (via rather technical estimates, cf. Proposition 5.4 in \cite {BFKM})  
 that $Q^q(t)\cdot J^q(t)$ is so closed to $J^q(t)$ that it is also  injective for $t$ very large; hence  $c_q$ is smaller or equal to the number of eigenvalues  $\lambda^q_\alpha(t) \in [0,1]$ which for $t$  large enough equals to the number  eigenvalues  $\lambda^q_\alpha(t) \in [0,C e^{-C' t}].$  
 
 \vskip .1in
{\bf About the proof of Theorem \ref{T3.3}}

A proof of Theorem \ref{T3.3} can be obtained in similar way using Lemma \ref {L5.3} applied to the same Hilbert space $H$, operator $A = (1/t )\cdot \Delta^q(t),$ and $H_1$ the finite dimensional vector space spanned by the forms $\overline \omega^q_{\alpha}(t),$  $\alpha= (x, \beta)= (x,I, P),$  with $x\in Cr(f)$ and $0\leq o(\alpha)= o^{\ind x}(\beta) \leq N$ derived from 
$\tilde \omega^{q, \ind(x)}_{I,P}(t)$  in a similar manner the forms $\overline \omega^q_y(t)$ were derived from the forms $\tilde \omega^q(t).$ 
As before one choses  $H_2:= H_1^{\perp}$ and one produces 
the constants $C_1, C_2,   \epsilon_1, \epsilon_2, T$ such that for $t>T$  
the hypotheses of Lemma \ref {L5.3} are satisfied for $t>T$ with $a= 2N + C_1\cdot t^{-\epsilon_1}, b= 2N+2 - C_2\cdot  t^{-\epsilon_2}.$
One establishes first an analogue of Proposition \ref {P5.4} where the forms $\omega ^{q,\epsilon}(t)$ are replaced by  $\omega^{q,k;\epsilon}_{I,P} (t)$ with $o^k(I,P)\leq 2N$ and  inequalities (1), (2), (3) able to  insure that for $t>T,$  strictly below the line $x(t)=t, y(t)=t (2k+1)$ in the complex plane, there are exactly $\sharp \{(i,P)\mid o(I,p)= N \}$ eigenvalue branches  and all other eigenvalue branches are strictly above this line. Moreover each  eigenvalue branch (below this  line)  has $\lim _{t\to \infty} \lambda^q(t)/t$ convergent to $2k', k'\leq N.$  
Details will be provided in a paper in preparation.
So far we are unable to decide if there exists  branches with $\lambda^q(t)/t$  unbounded.
 
\section{Virtually small 
spectral package of $(M,g,f)$}

For each $q$ denote by $\mathcal A^q_{vs}, \mathcal A^q_{la}, \mathcal A^q_{vs,0}, \mathcal A^q_{vs,+}$ the subsets of $\mathcal A^q$ defined by
\begin{enumerate}
\item $\mathcal A^q_{vs}:= \{\alpha\in A^q\mid  o(\alpha)=0\},$
\item $\mathcal A^q_{la}:= \{\alpha\in A^q\mid  o(\alpha)\ne0\},$
\item $\mathcal A^q_{vs,0}:= \{\alpha\in A^q_{vs}\mid  \lambda^q_\alpha(t) = 0 \},$
\item $\mathcal A^q_{vs, +}:= \{\alpha\in A^q_{vs}\mid  \lambda^q_\alpha (t) \ne 0 \}.$
\end{enumerate}
Note that $\mathcal A^q_{vs}$ is the same as  the cluster $\mathcal A^q(0)$ considered in the introduction.  
Clearly $\mathcal A^q_{vs} = \mathcal A^q_{vs,0} \sqcup \mathcal A^q_{vs,+}.$

Let $\Omega^q(M)_{vs}(t),\  \Omega^q(M)_{vs,0}(t), \ \Omega^q(M)_{vs, +}(t)$ be the span of $\omega^q_\alpha(t)$ with 
$\alpha\in \mathcal A^q_{vs},  \mathcal A^q_{vs,0},$ $\mathcal A^q_{vs,+},$ respectively; let  $\Omega^q(M)_{la}(t)$ be the 
orthogonal complement of $\Omega^q(M)_{vs}(t)$ in $\Omega^q(M).$  
 The first three  are finite dimensional subspaces of $\Omega^q(M)$  of dimension 
 $c_q,$
  $\beta_q(M),$ $c_q - \beta_q(M)$  and all four are equipped with a scalar product, the restriction of the scalar product on $\Omega^q(M).$ 
All these subspaces are preserved by $d^\ast (t)$ and left invariant by $\Delta^q(t)$  and in view of Hodge decomposition  for  $\Delta^q(t),$
$\Omega^q(M)= \Omega^q_{vs,0}(M)(t) \oplus  \Omega^q_{vs,+}(M)(t)\oplus \Omega^q_{la}(M)(t).$ 

In particular, one has 
 \  $(\Omega^\ast_{vs} (M)(t), d^\ast(t)),  (\Omega^\ast_{la}  (M)(t), d^\ast(t)) \subset (\Omega^\ast (M), d^\ast(t))$
  and 
 \newline $(\Omega^\ast_{vs,0}  (M)(t), 0), \  (\Omega^\ast_{vs,+}  (M)(t), d^\ast(t)) \subset (\Omega^\ast (M)(t)_{vs}, d^\ast(t))$  inclusions of sub complexes  and   
 each is an analytic family in $t$ of cochain complexes. One has   
\begin{equation} \label {E6.1}
\begin {aligned} 
(\Omega^\ast_{vs,0}  (M)(t), 0)\oplus  (\Omega^\ast_{vs,+}  (M)(t), d^\ast(t)) = &(\Omega^\ast (M)(t)_{vs}, d^\ast(t))\\
(\Omega^\ast_{vs}  (M)(t), d^\ast(t))\oplus  (\Omega^\ast_{la}  (M)(t), d^\ast(t)) = &(\Omega^\ast (M), d^\ast(t)).
\end{aligned}
\end{equation}
Consider the compositions 
 $$\xymatrix {&(\Omega^\ast_{vs}(M)(t), d^\ast(t))\ar[r]^{\subset}\ar@/^3pc/[rrr]|{L(t)^\ast}  \ar@/^2pc/[rr]|{Int^\ast_{vs}(t)} & (\Omega^\ast (M), d^\ast(t))\ar[r]^{Int^\ast(t)}  &(C^\ast,\partial ^\ast) \ar [r]^{S^\ast(t)} &(C^\ast, \partial ^\ast(t))}$$
with $L^q(t)= S^q(t)\cdot Int^q_{vs}(t)$ and $S^q(t)$ given by (\ref{E4.1}).
All these complexes are equipped with scalar product; 
on $C^q$ one considers the scalar product which makes the base $E_x\in C^q$
orthonormal. 

If $\varphi: (V, \langle, \rangle_V)\to (W, \langle, \rangle_W)$ is a linear map between two finite dimensional  vector spaces with scalar products define  
$Vol(\varphi):=(\det (\varphi^\sharp \cdot \varphi))^{1/2}$  which is different from zero iff $\varphi$ is injective. Define $$a^q(t):= Vol (Int_{vs}^q(t))$$  
analytic function in $t$ which has holomorphic extension to a neighborhood of $\mathbb R$ in $\mathbb C.$
  
\begin{theorem}\label {T6.1}\

 If the vector field $(-\grad_g f)$ is Morse-Smale  and the metric $g$ is flat near the critical points then there exists $T$ s.t. for $t> T$ the following holds true.
\begin{enumerate} 
\item $Int^q_{vs}(t)$ is an isomorphism of cochain complexes, 
\item there exists a family of isometries
$R^q(t): C^q \to \Omega^q(M)_{vs}(t)$ of finite dimensional
vector spaces so that
$ L^q(t)\cdot R^q(t)  = Id + O(1/t),$
hence $ L^q(t)$ is an $O(1/t)-$isometry. 
\end{enumerate}
\end{theorem}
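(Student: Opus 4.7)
The plan is to use the quasi-eigenforms $\overline{\omega}^q_y(t)$, $y \in \Cr_q(f)$, constructed in Section 5, project them orthogonally onto $\Omega^q_{vs}(M)(t)$, and then orthonormalize. The two items of the theorem will follow from showing that $L^q(t)$, evaluated on the resulting basis, agrees up to $O(1/t)$ with the canonical basis $\{E_y\}$ of $C^q$.

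First I would let $P^q(t)$ denote the orthogonal projection onto $\Omega^q_{vs}(M)(t)$ and set $\tilde e_y(t) := P^q(t)\overline{\omega}^q_y(t)$. By Proposition \ref{P5.4}(1), $\|\Delta^q(t)\overline{\omega}^q_y(t)\|_{L^2} \leq Ce^{-C't}$, and combining with the spectral gap in Theorem \ref{T3.2}(1), which gives $\Delta^q(t) \geq C_3 t$ on $(\Omega^q_{vs}(M)(t))^\perp$, yields
\[
\|(I - P^q(t))\overline{\omega}^q_y(t)\|_{L^2} \leq \frac{1}{C_3 t}\|\Delta^q(t)\overline{\omega}^q_y(t)\|_{L^2} \leq \frac{C}{t}e^{-C't}.
\]
So the $\tilde e_y(t)$ are almost orthonormal, and since $\dim \Omega^q_{vs}(M)(t) = c_q$ by Theorem \ref{T3.2}(2), they form a basis for $t$ large. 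Applying Gram--Schmidt produces an orthonormal basis $\{e_y(t)\}$ differing from $\{\tilde e_y(t)\}$ by an exponentially small perturbation; set $R^q(t)(E_y) := e_y(t)$, which is an isometry by construction.

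Next I would compute the matrix entries of $L^q(t)$ on the quasi-eigenforms:
\[
L^q(t)(\overline{\omega}^q_y(t))(E_x) = (\pi/t)^{(n-2q)/4}\,e^{-tf(x)}\int_{W^-_x} e^{tf}\,\overline{\omega}^q_y(t).
\]
For $x \neq y$ in $\Cr_q(f)$, the Morse--Smale condition forces $y \notin \overline{W^-_x}$ (there are no connecting trajectories between critical points of equal index, and $\overline{W^-_x}$ only contains broken trajectories to strictly lower-index critical points), so for $\epsilon$ small enough the support of $\overline{\omega}^q_y(t)$ is disjoint from $W^-_x$ and the entry vanishes. For $x = y$, the flatness of $g$ near $y$ together with the explicit formula from Proposition \ref{P5.2} reduces the right-hand side to a Gaussian integral over $\mathbb R^q$ (the negative Morse directions), which evaluates to exactly $1$ modulo exponentially small cutoff corrections. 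Hence $L^q(t)(\overline{\omega}^q_y(t)) = E_y + O(e^{-ct})$.

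The remaining task is to replace $\overline{\omega}^q_y(t)$ by $e_y(t)$, which amounts to controlling the contribution of $\eta_y(t) := e_y(t) - \overline{\omega}^q_y(t)$. Since $f \leq f(x)$ on $W^-_x$ with equality only at $x$, the factor $e^{-tf(x)}\cdot e^{tf|_{W^-_x}}$ is uniformly bounded by $1$, and it suffices to dominate $(\pi/t)^{(n-2q)/4}\cdot \mathrm{vol}(\hat W^-_x)\cdot \|\eta_y(t)\|_{L^\infty}$ by $O(1/t)$. The main obstacle is converting the exponentially small $L^2$-bound on $\eta_y(t)$ into a pointwise bound: this requires $t$-weighted elliptic estimates for $\Delta^q(t)$, obtained by iterating Proposition \ref{P5.4} and applying Sobolev embedding while tracking how the elliptic constants grow in $t$. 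Once sub-exponential $L^\infty$ control is in hand, it easily beats $1/t$, giving $L^q(t)\cdot R^q(t) = \Id + O(1/t)$ and proving (2); item (1) then follows, since $Int^q_{vs}(t)$ is a linear map between spaces of equal dimension $c_q$ whose composition with $R^q(t)$ and $\mathcal S^q(t)$ is invertible for $t$ sufficiently large.
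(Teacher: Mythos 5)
Your proposal is correct and is essentially the paper's own route (which it defers to \cite{BFKM}, Theorem 5.5): project the compactly supported Gaussian quasimodes $\overline{\omega}^q_y(t)$ onto the span of the virtually small eigenforms using the spectral gap of Theorem \ref{T3.2}, orthonormalize (the paper uses the polar decomposition $\mathcal I^q(t)(\mathcal I^q(t)^\sharp\mathcal I^q(t))^{-1/2}$ with $\mathcal I^q(t)=Q^q(t)J^q(t)$ rather than Gram--Schmidt, an inessential difference), and compare $L^q(t)$ on this basis with $\{E_y\}$ via the localized Gaussian integral over $W^-_y$ and the Morse--Smale disjointness for $x\neq y$, with the remaining $t$-dependent elliptic/Sobolev estimates being exactly the technical content the paper cites from \cite{BFKM}. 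Note that you tacitly used the scaling $(\pi/t)^{(n-2q)/4}e^{-tf(x)}$ in place of the paper's formula (\ref{E4.1}) with $e^{-tq}$, which is evidently the intended normalization since it is what makes the diagonal entries equal to $1$ up to exponentially small corrections.
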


Because of item (1) $a^q(t)$ is an analytic function with finitely many zeros  and then
\newline $a(t):=\prod a^q(t)^{(-1^q}$ is a priory a meromorphic function with finitely many zero and poles. 
\vskip .2in

{\bf About  the proof of Theorems \ref{T6.1}}

Item (1) follows from item (2) in view of the definition of $L^q(t).$ 
The isometry $R^q(t)$ is given by :
\begin{equation} 
R^q(t):= \mathcal I^q(t)\cdot  (\mathcal I^q(t)^{\sharp} \cdot \mathcal I^q(t))^{-1/2},\ \mathcal I^q(t)= Q^q(t)\cdot J^q(t)
\end{equation}
where $\mathcal I^q(t)^{\sharp}$ denotes  the adjoint of $\mathcal I^q(t)$.  Note that $\mathcal I^q(t),$ for $t>T,$ is a linear bijective map between f.d. vector spaces with scalar product.

The verification of item (2)  involves a number of estimates and is quite technical. It is a particular case of Theorem 5.5 item 5. in \cite{BFKM}.   
The result can be also recovered  from the work of Helffer an Sj\"ostrandt \cite {HS} but  the proof in \cite {BFKM} is different.

The following result is due to Y. Lee and the present author, cf. \cite{BL}.  
\begin {theorem}   \label{T6.2}\

If $-\grad_f$ is Morse-Smale and the metric $g$ is flat near critical points then the following holds. 
\begin{enumerate}
\item
The meromorphic function $a(t) =\prod (a^q(t))^{(-1)^q} $ has no zero and no poles and has a holomorphic extension to a neighborhood of $\mathbb R$ in $\mathbb C.$
\item  If $M^n$ is a closed odd dimensional manifold  then 
$$\log Tor (M)= 1/2 \sum _q   (-1)^{q+1} q (\sum_{\alpha\in \mathcal A^q_{vs,+}} \ln \lambda^q_\alpha (0))  + \ln a^f (0)  - \sum (-1)^i\ln V^i.$$
\end{enumerate}
\end{theorem}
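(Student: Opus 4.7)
The plan is to leverage the Cheeger--M\"uller theorem (Theorem \ref{T2.1}(2)) together with a spectral splitting of the analytic torsion according to the decomposition $\Omega^q(M) = \Omega^q_{vs,0}(M)(0) \oplus \Omega^q_{vs,+}(M)(0) \oplus \Omega^q_{la}(M)(0)$, and to track how each summand transforms under the Witten deformation.

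For part (1), by Theorem \ref{T6.1}(1) each $Int^q_{vs}(t)$ is a linear isomorphism for $t > T$, so $a^q(t) = Vol(Int^q_{vs}(t))$ is nonvanishing and real-analytic on $(T,\infty)$, with the holomorphic extension to a neighborhood of $\mathbb{R}$ coming from the analyticity of the eigenform branches. The task is to show that zeros and poles of the individual factors cancel in the alternating product. My approach is to identify $a(t)$, up to an explicit nowhere-vanishing factor, with the Reidemeister--Milnor torsion of the mapping cone of the chain quasi-isomorphism
\[
Int^*_{vs}(t) : (\Omega^*_{vs}(M)(t), d^*(t)) \longrightarrow (C^*,\partial^*).
\]
At every $t \in \mathbb{R}$ both complexes have cohomology canonically isomorphic to $H^*(M;\mathbb{R})$, and the induced map on cohomology is the standard de~Rham-to-Morse identification via integration over unstable cells, hence an isomorphism for every $t$. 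Therefore the mapping cone is acyclic at every $t$, and the torsion of a real-analytic family of acyclic finite-dimensional cochain complexes with analytically-varying scalar products is real-analytic and strictly positive, which gives both the absence of zeros/poles and the holomorphic extension.

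For part (2), the Cheeger--M\"uller theorem gives $\ln \mathbb{T}or(M) = \ln T_{an}(M,g) - \sum_i (-1)^i \ln V^i$. Split the analytic torsion according to the spectral decomposition at $t=0$:
\[
\ln T_{an}(M,g) = \tfrac{1}{2} \sum_q (-1)^{q+1} q \sum_{\alpha \in \mathcal{A}^q_{vs,+}} \ln \lambda^q_\alpha(0) + \tfrac{1}{2} \sum_q (-1)^{q+1} q \ln \det{}_{la}\Delta^q(0),
\]
where $\det_{la}\Delta^q(0)$ denotes the zeta-regularized product of large eigenvalues. It then suffices to verify the identity
\[
\tfrac{1}{2} \sum_q (-1)^{q+1} q \ln \det{}_{la}\Delta^q(0) = \ln a^f(0).
\]
To prove this I would define
\[
F(t) := \tfrac{1}{2} \sum_q (-1)^{q+1} q \ln \det{}_{la}\Delta^q(t) - \ln a^f(t)
\]
and show that $F \equiv 0$ on $\mathbb{R}$. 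Constancy is obtained by differentiating: the standard formula for the derivative of the zeta-regularized determinant of a type-A holomorphic family, applied to $\Delta^q(t)|_{\Omega^q_{la}(M)(t)}$, can be combined with the differentiation of $\ln a^f(t)$ via the orthogonal projector $Q^q(t)$ onto virtually small eigenspaces and the eigenform analyticity of Theorem \ref{T3.1}; odd-dimensionality, together with the $\star$-duality $\Delta^{n-q}\star = \star \Delta^q$, then forces the alternating sum of local anomaly contributions to vanish. Once $F$ is known to be constant, the value of the constant is pinned down by passing to $t \to \infty$: by Theorem \ref{T6.1}(2), $\ln a^f(t)$ asymptotes to a sum of logarithms of Gaussian integrals arising from the explicit scaling $\mathcal{S}^q(t)$, and this matches exactly the contribution of the harmonic-oscillator eigenvalues of Propositions \ref{P5.1} and \ref{P5.2} to $\ln \det_{la}\Delta^q(t)$ after the alternating sum.

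The main obstacle I anticipate is the rigorous derivation of $F'(t) = 0$: the heat-kernel expansion of $\ln \det_{la}\Delta^q(t)$ produces local Seeley--DeWitt anomaly terms that must cancel in the alternating $q$-sum, and it is at precisely this step that the odd-dimensionality hypothesis is used essentially. A cleaner route, and presumably the one taken in \cite{BL}, is to invoke a Bismut--Zhang-style anomaly formula tailored to the Witten deformation, reducing the entire computation to a local model at the critical points where Propositions \ref{P5.1} and \ref{P5.2} give everything in closed form.
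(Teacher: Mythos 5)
Your treatment of item (1) is essentially the paper's own argument in different clothing: the paper applies the multiplicativity identity $T(\mathcal C_2)/T(\mathcal C_1)=Vol(H(\varphi))/Vol(\varphi)$ of \cite{BFK2} to $\varphi(t)=Int^\ast_{vs}(t)$, obtaining $a(t)=Vol(H(\varphi(t)))\,T(\Omega^\ast_{vs}(M)(t),d^\ast(t))\,/\,T(C^\ast,\partial^\ast)$, which is strictly positive and analytic because the branches are analytic and $H^q(\varphi(t))$ is an isomorphism for every $t$; your mapping-cone torsion is an equivalent packaging of the same finite-dimensional fact, so item (1) is fine.

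For item (2), however, there is a genuine gap, and your route departs completely from the paper's. The paper never invokes Cheeger--M\"uller nor any zeta-regularized determinant: item (2) is obtained by evaluating the displayed finite-dimensional identity at $t=0$, where $T(\Omega^\ast_{vs}(M)(0),d^\ast(0))$ is the finite product of the $\lambda^q_\alpha(0)$, $\alpha\in\mathcal A^q_{vs,+}$, and the factor $Vol(H(\varphi(0)))$ together with the combinatorial identification of $T(C^\ast,\partial^\ast)$ with $\mathbb Tor(M)$ accounts for the $V^i$ terms; taking $\ln$ gives the formula. Your plan instead needs the identity $\ln a^f(0)=\tfrac12\sum_q(-1)^{q+1}q\,\ln{\det}_{\la}\Delta^q(0)$, and the proposed proof of it --- constancy of $F(t)$ plus evaluation of the constant at $t\to\infty$ --- is left entirely to hope. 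Concretely: (i) you give no computation of $F'(t)$; the anomaly/metric-variation formulas control the full analytic torsion corrected by cohomology volumes, not the large-spectrum part minus $\ln a^f(t)$, and since the finite-dimensional quantities $T(\Omega^\ast_{vs}(M)(t),d^\ast(t))$ and $Vol(H(\varphi(t)))$ genuinely vary with $t$, odd-dimensionality and $\star$-duality alone do not force your particular $F$ to be constant; (ii) the evaluation at $t\to\infty$ is not a soft step: by Theorem \ref{T6.1}(2), $\ln a^q(t)$ differs from $-\ln Vol(\mathcal S^q(t))$ by $O(1/t)$ and hence contains terms linear in $t$ and in $\ln t$, and the large-spectrum determinants have matching divergent asymptotics, so comparing them is exactly the hard asymptotic analysis of \cite{BFKM} (Theorem 5.5) or \cite{BZ}, i.e.\ the machinery behind the Witten-deformation proofs of Cheeger--M\"uller itself. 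As written, the decisive step of your argument is an acknowledged obstacle rather than a proof; it is also contrary to the point of Theorem \ref{T6.2}(2), which is precisely that $\mathbb Tor(M)$ is recovered from the finite virtually small package by elementary finite-dimensional linear algebra plus analyticity, without passing through the full spectral package as in Theorem \ref{T2.1}(2).
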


\begin{proof}
Observe  
\begin{enumerate}
\item  
If  $\varphi (t):(V(t), \langle,\rangle_{V(t)})  \to  (W(t), \langle,\rangle_{W(t)})$ 
is continuous/analytic family of isomorphisms between  finite dimensional vector spaces  equipped with scalar products \footnote{for example $V(t)$ resp.$W(t)$ appear as images in $\mathcal V$ resp.$\mathcal W,$  of an analytic/continuous family of bounded projectors $P(t): \mathcal V \to \mathcal V$ resp. $Q(t): \mathcal W\to \mathcal W$ for $\mathcal V$ resp  $\mathcal W$ topological vector spaces; this give meaning to "analytic family"} 
then the function $Vol(\varphi (t))$ is continuous/analytic in $t.$
 \item For a cochain complex $\mathcal C= (C^\ast, d^\ast)$ of finite dimensional vector spaces equipped with scalar products
$$ \mathcal C: \  \xymatrix {0\ar[r] &(C^0, \langle \ \rangle_0)\ar[r]^{d^0} &(C^1, \langle\  \rangle_1)\ar[r]^{d^1}] &\cdots &(C^n, \langle \ \rangle_n)\ar[r]&0}$$ 
one denotes by $\Delta^q_{\mathcal C}:= \delta^{q+1}\cdot d^q + d^{q-1}\cdot\delta^q,$ \  $\delta $ the adjoint of $d,$ and by $\det' \Delta_C^q\ne 0$ be the product of nonzero eigenvalues  of $\Delta_C^q.$ The product  $$T(\mathcal C):= \prod  ({\det}' \Delta_C^q)^{(-1)^ q q/2}$$  is referred to as the torsion of $\mathcal C.$
For a continuous/analytic family of cochain complexes $\mathcal C(t)= (C^\ast(t), d^\ast(t)) $  such that $\dim C^q(t)$ and $\dim H^q(\mathcal C(t))$ are constant in $t$ 
for any $q$  the function $T(\mathcal C(t))$ is continuous/analytic in $t.$
\newline The verifications of (1) and (2) are straightforward from definitions.

\item Suppose that $\varphi  : \mathcal C_1\to \mathcal C_2,$  is a morphism of cochain complexes of finite dimensional vector spaces with scalar products,. $\mathcal C_{i}= (C^\ast_{i}, d^\ast _{i}), i=1,2,$ \  $\varphi = \{ \varphi^q : C_1^q \to  C_2^q \}.$ 
Suppose that for any $q,$ \ $\varphi^q$ is an isomorphism. Then  $\varphi$ induces 
the isomorphism $H^q(\varphi): H^q(\mathcal C_1)\to H^q(\mathcal C_2)$ between vector spaces equipped  with induced scalar product.   Let 
$$Vol (\varphi): =\prod (vol (\varphi^q))^{(-1)^q}$$ and $$Vol (H(\varphi)): = \prod vol (H^q(\varphi))^{(-1)^q} .$$ As verified in  \cite {BFK2} Proposition 2.5 one has 
 
\begin{equation}\label{E6.3}
T(\mathcal C_2) / T(\mathcal  C_1)= Vol(H (\varphi)) / Vol(\varphi).
\end{equation}
\item  For a continuous/analytic family of isomorphisms $\varphi (t): \mathcal C_1(t)\to \mathcal C_2(t),$\  $t\in \mathbb R,$ with $\dim C^q_1(t)= \dim C^q_2(t)$ and 
$\dim H^q(C_1(t))= \dim H^q(C_2 (t)$  constant in $t,$ the real-valued functions  
$T(C_1(t))$,  $T(C_1(t),$  $Vol (\varphi(t)) , Vol (H(\varphi(t))) $ are nonzero and continuous/analytic. 
\end{enumerate}

We consider  to $\varphi (t)= Int^\ast (t) : (\Omega^\ast_{vs}(M)(t), d^*(t) \to (C^\ast,\partial ^\ast)$  with $\ast= 0,1, \cdots, \dim M.$  
In view of (4) the function $$\frac{T(\Omega^\ast_{vs}(M)(t), d^\ast(t))} {T(C^\ast,\partial ^\ast)} \cdot Vol (H(\varphi(t))$$ is a strictly positive analytic  function and in view of (3) agrees with $a(t)$ for all  $t$ but the finite collection which might be a zero or a  pole for $a(t).$  Hence the  meromorphic function $a(t)$ has no zeros and no poles.
This establishes item (1) in Theorem \ref{T6.2}. Together with  (\ref{E6.3}) it also implies
$$\frac{T(\Omega^\ast_{vs}(M)(t), d^\ast(t))} {a(t)} \cdot Vol (H(\varphi(t))= T(C^\ast,\partial^\ast).$$
Evaluation at $t=0$ combined with the observation that $\mathbb Tor(M)= T(C^\ast,\partial^\ast)$    implies  
$$\frac{T(\Omega^\ast_{vs}(M), d^\ast)} {a(0)} \cdot Vol (H(\varphi(0)))= \mathbb Tor(M).$$ Taking  "$\ln$" one derives item (2) in Theorem \ref{T6.2}.

\end{proof}
\section{ Conjecture,  Questions, Problems}

{\bf Conjecture 1.} \
{\it The set $\mathcal A^q(\infty)$ is empty.}

{\bf Conjecture 2.} \   
{\it For a generic family of pairs $(g,f)$ the positive eigenvalue - branches  and therefore their corresponding eigenform - branches have  multiplicity one .
 
A  stronger  version of this conjecture asks for the same conclusion (multiplicity one for eigenvalue branches) for a generic set of smooth functions $f$ given any metric $g.$}    

In view of \cite {B2}, which implies that  for generic continuous $f,$ $H^r(M;\mathbb R)$ in the presence of a scalar product has a canonical orthonormal base determined by $f$,  a positive answer to Conjecture 1 implies that for $f$ generic with  this base completed by $\omega^q_\alpha (0), \alpha \in \mathcal A^q\setminus \mathcal A^q_{vs,0},$ one obtains a canonical  orthonormal  Hilbert space base for $L_2(\Omega^r(M)).$  This can be used to reduce  PDE problems involving differential forms to  "manipulation of sequences ", as it is the  case of the torus equipped with the flat metric (via Fourier series theory).
\vskip .1in
 {\bf Conjecture  3.} {\it All numbers $a^q \ne 0.$} 

If Conjecture 3 holds true, not only the cohomology has a canonical realization by differential forms (the harmonic forms),  as Hodge theorem in Riemannian geometry states,  but the entire geometric complex $(C^\ast, \partial ^\ast)$ associated to $(g,f),$ at least in the case the hypotheses in Theorem \ref {T6.1} are satisfied, can be canonically realized as a subcomplex $(\Omega^\ast_{vs}(M), d^\ast)$ of  
the de-Rham  complex.  This provides  a substantial and consequential generalization of Hodge theorem.   
\vskip .1in

{\bf Question 4.} {\it We expect that for a fixed metric $g$ (under mild hypotheses) the virtually small spectral package $\{\lambda^q_\alpha(0), \omega^q_\alpha(0), \alpha\in \mathcal A^q_{vs}\},$ 
  depends on the Morse function $f$ only up to homotopy by Morse functions.}.

Such robustness conclusion is necessary in order  to calculate  the virtually small spectral package with arbitrary accuracy by effective numerical methods. 
\vskip .1in

{\bf Question 5} {\it Can {\bf Observation} in Section 1 be used to determine  $\mathcal A_{vs}^q$ inside $\mathcal A^q$?} 
\vskip .1in

{\bf Problem 1.} {\it The works of Cheeger and Buser  cf. \cite {JCH} and \cite{PB} provide bounds  for the first nonzero eigenvalue of $\Delta^0.$ 
One hopes to extend this to the virtually small spectral package and also derive bounds for the eigenvalues of the virtually small spectral package.}
\vskip .1in
The following two problems concern the spectral package  of a Riemannian manifold  $(M,g).$  
\vskip .1in
{\bf Problem 2:} {\it Find  a spectral description of the numbers $V^r$.}

 For $V^n$ we have the famous {\it Weyl law} which evaluates the volume of $M$ in terms of the growth 
of the number of eigenvalues of 
the Laplacian $\Delta ^0$  the same as of $\Delta^n.$ 
\vskip .1in

{\bf Problem 3.} {\it A result of K.Uhlenbeck \cite {U} shows that for a generic Riemannian metric $g$ on a connected closed Riemannian manifold the  eigenvalues of the Laplacian $\Delta^0$ and then of $\Delta^n$ are all simple. hen for a generic metric on an oriented $2-$dimensional manifold the multiplicity of nonzero eigenvalues of $\Delta^q$is one for $q=0,2$ and $2$ for $q=1.$ Clarify the multiplicity of nonzero eigenvalues of $\Delta^q$ for a generic metric on a closed manifold of arbitrary dimension.}

\vskip .2in
{\small
Department of Mathematics,

Ohio State University, 

Columbus, OH 43210,USA}

\end{document}